\documentclass[12pt,oneside,reqno]{amsart}
\usepackage{amsmath, amssymb, amsfonts, amsthm, enumerate, enumitem, bm, float}

\usepackage[margin=1in]{geometry}

\newtheorem{prop}{Proposition}[section]
\newtheorem{lemma}[prop]{Lemma}
\newtheorem{thm}[prop]{Theorem}
\newtheorem{cor}[prop]{Corollary}
\newtheorem{ex}[prop]{Example}

\theoremstyle{definition}

\newtheorem{question}[prop]{Question}

\renewcommand{\mod}{\textrm{mod}~}
\DeclareMathOperator{\maxdim}{MaxDim}
\DeclareMathOperator{\mindim}{MinDim}


\begin{document}

\title{On the Intersection Numbers of Finite Groups}

\date{\today}

\author{Kassie Archer}
\author{Humberto Bautista Serrano}
\author{Kayla Cook}
\author{L.-K. Lauderdale}
\author{Yansy Perez}  
\author{Vincent Villalobos}

\begin{abstract}
The \textit{covering number} of a nontrivial finite group $G$, denoted $\sigma(G)$, is the smallest number of proper subgroups of $G$ whose set-theoretic union equals $G$.  In this article, we focus on a dual problem to that of covering numbers of groups, which involves maximal subgroups of finite groups. For a nontrivial finite group $G$, we define the \textit{intersection number} of $G$, denoted $\iota(G)$, to be the minimum number of maximal subgroups whose intersection equals the Frattini subgroup of $G$.  We elucidate some basic properties of this invariant, and give an exact formula for $\iota(G)$ when $G$ is a nontrivial finite nilpotent group.  In addition, we determine the intersection numbers of a few infinite families of non-nilpotent groups.  We conclude by discussing a generalization of the intersection number of a nontrivial finite group and pose some open questions about these invariants.
\end{abstract}

\maketitle
\thispagestyle{empty}


\section{Introduction}

Throughout this article, we consider only finite groups.  In a popular paper of Cohn \cite{COHN1994}, the concept of a covering number of a group was introduced.  The \textbf{covering number} of a nontrivial group $G$, denoted $\sigma(G)$, is the smallest number of proper subgroups of $G$ whose set-theoretic union equals $G$.  For example, the quaternion group of order $8$, denoted $Q_8$, satisfies $\sigma(Q_8)\leq3$ as the union of its three subgroups of order $4$ equals $Q_8$; since each proper subgroup of $Q_8$ has order at most $4$ and contains the identity element, it follows that $\sigma(Q_8)=3$.   Covering numbers are the subject of prior research by numerous authors, and we briefly discuss some of their results below.

A group $G$ has a covering number if and only if $G$ is noncyclic.  Additionally, it is well-known that the value of $\sigma(G)$, when it exists, is at least $3$; those groups $G$ satisfying $\sigma(G)=3$ were considered in \cite{BBM1970,HR1959,SCORZA1926}.  In \cite{COHN1994}, Cohn classified the groups $G$ with $\sigma(G) \in \{4,5,6\}$, and conjectured that there is no group $G$ with $\sigma(G)=7$.  Tomkinson \cite{TOMKINSON1997} proved this conjecture and computed the value of $\sigma(G)$ when $G$ is a noncyclic solvable group. He also inquired about which integers could occur as covering numbers of nontrivial groups, and thought the current evidence suggested that there are no groups $G$ with $\sigma(G) \in \{11,13,15\}$.  However, Abdollahi, Ashraf, and Shaker \cite{AAS2007} proved that the covering number of the symmetric group on 6 symbols is 13; Bryce, Fedri, and Serena \cite{BFS1999} found a linear group with covering number 15.  The remaining case was confirmed by Detomi and Lucchini \cite{DETOMI-LUCCHINI2008}, who proved that there is no group $G$ with $\sigma(G)=11$.  Additionally, the combined results of Garonzi, Kappe, and Swartz \cite{GARONZI2013B,GKS2018} classified every integer less than 130 that is a covering number of some nontrivial group.  More generally, the structure of groups $G$ containing no normal nontrivial subgroup $N$ such that $\sigma(G/N) = \sigma(G)$ was investigated by Detomi and Lucchini \cite{DETOMI-LUCCHINI2008}.  There have also been investigations into the values of $\sigma(G)$ for certain nonsolvable groups $G$ (see \cite{BFS1999, EM2016, GARONZI2013, HOMMES2006, KNPS2016, LUCIDO2003, MAROTI2005, OE2019, SWARTZ2016}), but in general establishing the covering number of a nonsolvable group remains a topic of ongoing research.

The focus of this article is an investigation into a dual problem to that of covering numbers of groups, which involves intersections of maximal subgroups of groups.  Recall that a subgroup $H$ of a group $G$ is \textbf{maximal} if $H$ is a proper subgroup of $G$ and there are no other proper subgroups of $G$ that contain $H$.  The intersection of all maximal subgroups of $G$ is called the \textbf{Frattini subgroup} of $G$ and is denoted by $\Phi(G)$. For a nontrivial group $G$, we define the \textbf{intersection number} of $G$, denoted $\iota(G)$, to be the \emph{minimum} number of maximal subgroups whose intersection is $\Phi(G)$.  As an example, the quaternion group $Q_8$ has three maximal subgroups and the intersection of any two of these is $\Phi(Q_8)$.  Therefore, $\iota(Q_8)=2$ because each maximal subgroup of $Q_8$ properly contains $\Phi(Q_8)$.

For the nontrivial group $G$, there are two group invariants that nicely bound the value of $\iota(G)$.  Let $\mathcal M=\{M_i\}_{i\in I}$ be a family of maximal subgroups of $G$ indexed by a set $I$.  If 
	\[\bigcap_{i \neq j} M_i \subset \bigcap_{i \in I} M_i\]
for all $i \in I$, then $\mathcal M$ is said to be \textbf{irredundant}.  Fernando \cite{FERNANDO2015} defined the \textbf{maximal dimension} of $G$, denoted $\maxdim(G)$, to be the maximal size of an irredundant family of $G$; Garonzi and Lucchini \cite{GL2019} defined the \textbf{minimal dimension} of $G$, denoted $\mindim(G)$, to be the minimal size of an irredundant family of $G$.  If $\iota(G)=k$ with $k\in \mathbb Z^+$ and $M_1,M_2,\ldots,M_k$ are maximal subgroups of $G$ such that
	\[M_1 \cap M_2 \cap \cdots \cap M_k=\Phi(G),\]
then $\{M_i\}_{i=1}^k$ is irredundant.  Therefore,
	\[\mindim(G) \leq \iota(G) \leq \maxdim(G).\]
Burness, Garonzi, and Lucchini \cite{BGL2019} noted that if $G$ is nilpotent, then $\maxdim(G)$ and $\mindim(G)$ coincide; Fernando \cite{FERNANDO2015} prove that $\maxdim(G)$ is equal to the maximal size of a minimal generating set of $G$.  Consequently, $\iota(G)$ is equal to the maximal size of a minimal generating set of $G$, and the results of Proposition~\ref{thm:NIL GROUPS} prove this by first establishing some previously unknown properties of $\iota(G)$.  Moreover, Burness, Garonzi, and Lucchini \cite{BGL2019} independently studied $\iota(G)$ (denoted $\alpha(G)$ in their article) for nonabelian simple groups $G$.  In this article, we consider intersection numbers of other families of groups and conclude by studying a new group invariant.

This article is organized as follows.  In Section~\ref{sec:PRELIMS}, we prove some basic properties about the intersection numbers of nontrivial groups.  The results of Section~\ref{sec:NILPOTENT GROUPS} establish the value of $\iota(G)$, where $G$ is a nontrivial nilpotent group, and in Section~\ref{sec:NON-NILPOTENT}, we consider the intersection numbers of dihedral groups, dicyclic groups, and symmetric groups. In Section~\ref{sec:INN}, we define and then briefly investigate a second group invariant, called the inconjugate intersection number.  Finally, we pose some open questions throughout Section~\ref{sec:OPEN QUESTIONS} about the intersection and inconjugate intersection numbers of nontrivial groups in general.


\section{Preliminary Results}\label{sec:PRELIMS}

The results in this section establish some basic properties about the intersection numbers of nontrivial groups.

\begin{prop}\label{prop:DIRECT PRODUCTS}
If $G_1$ and $G_2$ are nontrivial groups, then $\iota(G_1\times G_2) \leq \iota(G_1)+\iota(G_2)$.
\end{prop}

\begin{proof}
Choose $k,\ell \in \mathbb Z^+$ such that $\iota(G_1)=k$ and $\iota(G_2)=\ell$.  Assume that $M_1,M_2,\ldots,M_k$ and $K_1,K_2,\ldots,K_\ell$ are maximal subgroups of $G_1$ and $G_2$, respectively, such that
		\[M_1\cap M_2\cap \cdots\cap M_k =\Phi(G_1) \quad \text{ and } \quad K_1\cap K_2\cap \cdots\cap K_\ell =\Phi(G_2).\] 
For each $i \in \{1,2, \ldots,k\}$, the subgroup $M_i \times G_2$ is maximal in $G_1\times G_2$.  Additionally, $G_1 \times K_j$ is a maximal subgroup of $G_1 \times G_2$ for each $j \in \{1,2,\ldots,\ell\}$.  It follows that 
		\[\bigcap_{i=1}^k(M_i \times G_2) \cap \bigcap_{j=1}^\ell (G_1 \times K_j)=\Phi(G_1) \times \Phi(G_2)=\Phi(G_1\times G_2),\]
where the last equality holds because $G_1\times G_2$ is a finite group.  Therefore, $\iota(G_1\times G_2) \leq k+\ell$ and the result follows.
\end{proof}

The forthcoming example demonstrates that the inequality given in Proposition~\ref{prop:DIRECT PRODUCTS} can be strict.  

\begin{ex}
If $A_5$ denotes the alternating group on $5$ symbols, then $\iota(A_5\times A_5)< \iota(A_5) + \iota(A_5)$.
\end{ex}

\begin{proof}
The group $A_5$ has three isomorphism types of maximal subgroups, namely the dihedral group of order $10$, the alternating group of order $4$, and the symmetric group of order $3$.  Consider the following two maximal subgroups of $A_5$, each of which is isomorphic to $S_3$:
	\[M_1 = \{1,\ (1,2,3),\ (1,3,2),\ (1,2)(4,5),\ (1,3)(4,5),\ (2,3)(4,5) \}\]
and 
	\[M_2 = \{1,\ (1,2,4),\ (1,4,2),\ (1,2)(3,5),\ (1,4)(3,5),\ (2,4)(3,5) \}.\]
Since 
	\[M_1 \cap M_2 = \{1\} = \Phi(A_5),\]
it follows that $\iota(A_5) \leq 2$.  Moreover,  $\iota(A_5)=2$ because each maximal subgroup of $A_5$ contains $\Phi(A_5)$ as a proper subgroup.  Now consider the group $A_5 \times A_5$ as the permutation group
		\[A_5\times A_5 \cong\langle (1,2,3,4,5)(6,7,8,9,10),(1,3,2)(6,7,8)\rangle.\]
Each of
	\begin{align*}
		K_1& =\langle (1,2,3,4,5)(6,7,8,9,10), (1,3,5)(6,8,10) \rangle,\\
		K_2& =\langle (1,2,3,4,5)(6,8,7,10,9),(1,3,5)(6,7,9) \rangle,
	\end{align*}
and
		\[K_3 =\langle (1,2,3,4,5)(6,9,7,8,10), (1,3,5)(6,7,10) \rangle\]
is a maximal subgroup of this permutation group that is isomorphic to $A_5$.  Since 
	\[K_1 \cap K_2 \cap K_3 = \{1\}=\Phi(A_5 \times A_5),\]
we have that $\iota(A_5 \times A_5) \leq 3$ and the result now follows.
\end{proof}

\begin{prop}\label{lem:QUOTIENT}
Let $N$ be a normal subgroup of the nontrivial group $G$.  If $N \subseteq \Phi(G)$, then $\iota(G/N) = \iota(G)$.
\end{prop}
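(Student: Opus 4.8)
The plan is to exploit the standard correspondence between maximal subgroups of $G$ that contain $N$ and maximal subgroups of $G/N$, together with the fact that $\Phi(G)/N = \Phi(G/N)$ whenever $N \subseteq \Phi(G)$. First I would recall why every maximal subgroup of $G$ contains $N$: since $N \subseteq \Phi(G)$ and $\Phi(G)$ is the intersection of all maximal subgroups of $G$, each maximal subgroup $M$ of $G$ satisfies $N \subseteq \Phi(G) \subseteq M$. Consequently the correspondence theorem gives an inclusion-preserving bijection $M \leftrightarrow M/N$ between the maximal subgroups of $G$ and the maximal subgroups of $G/N$. Moreover, under this bijection, intersections correspond: for maximal subgroups $M_1, \dots, M_k$ of $G$ we have $(M_1 \cap \cdots \cap M_k)/N = (M_1/N) \cap \cdots \cap (M_k/N)$, again because each $M_i \supseteq N$.

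Next I would pin down the Frattini subgroups. Since every maximal subgroup of $G$ contains $N$, the intersection of all of them, namely $\Phi(G)$, contains $N$; taking the quotient, $\Phi(G)/N$ equals the intersection of all the $M/N$, which is exactly $\Phi(G/N)$. (This is the well-known fact that $\Phi(G/N) = \Phi(G)/N$ when $N \le \Phi(G)$, but I would prove it inline from the correspondence rather than cite it, to keep the argument self-contained.) With these two observations in hand, the equality of intersection numbers is essentially a translation. If $\iota(G) = k$, witnessed by maximal subgroups $M_1, \dots, M_k$ of $G$ with $M_1 \cap \cdots \cap M_k = \Phi(G)$, then $M_1/N, \dots, M_k/N$ are maximal in $G/N$ and their intersection is $\Phi(G)/N = \Phi(G/N)$, so $\iota(G/N) \le k$. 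Conversely, if $\iota(G/N) = \ell$ witnessed by maximal subgroups $\overline{M}_1, \dots, \overline{M}_\ell$ of $G/N$, pull them back to maximal subgroups $M_1, \dots, M_\ell$ of $G$ (each containing $N$); their intersection $M_1 \cap \cdots \cap M_\ell$ is a subgroup of $G$ containing $N$ whose image in $G/N$ is $\Phi(G/N) = \Phi(G)/N$, and since it contains $N$ this forces $M_1 \cap \cdots \cap M_\ell = \Phi(G)$, giving $\iota(G) \le \ell$. Combining the two inequalities yields $\iota(G/N) = \iota(G)$.

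The only genuinely delicate point — and the one I would be most careful to state correctly — is the step "an intermediate subgroup containing $N$ is determined by its image," i.e. that if $H \le G$ with $N \le H$ and $H/N = \Phi(G)/N$, then $H = \Phi(G)$. This is just the injectivity half of the correspondence theorem applied to subgroups between $N$ and $G$, but it is the hinge on which the converse inequality turns, so I would spell it out. Everything else is routine bookkeeping with the correspondence theorem; no case analysis or computation is needed, and the nontriviality of $G$ is used only to ensure $\Phi(G)$ and the relevant intersection numbers are defined (if $N = G$ the hypothesis $N \subseteq \Phi(G)$ would force $G = \Phi(G)$, impossible for a nontrivial finite group since $\Phi(G)$ is a proper subgroup, so in fact $G/N$ is nontrivial as well and $\iota(G/N)$ makes sense).
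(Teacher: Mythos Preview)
Your proposal is correct and follows essentially the same route as the paper: both arguments use the correspondence (Fourth Isomorphism) theorem to set up a bijection between maximal subgroups of $G$ and of $G/N$, identify $\Phi(G/N)$ with $\Phi(G)/N$, and then translate a witnessing family of maximal subgroups in each direction to obtain the two inequalities. Your version is in fact more careful than the paper's, explicitly justifying that every maximal subgroup contains $N$, that intersections pass to the quotient, and that $G/N$ is nontrivial.
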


\begin{proof}
Suppose that $\iota(G)=k$ with $k\in \mathbb Z^+$, and let $M_1,M_2,\ldots,M_k$ be maximal subgroups of $G$ such that
	\[M_1 \cap M_2 \cap \cdots \cap M_k=\Phi(G).\]
By the Fourth Isomorphism Theorem, the maximal subgroups $M/N$ of $G/N$ are in one-to-one correspondence with the maximal subgroups $M$ of $G$.  It follows that 
	\begin{equation}\label{eqn:QUOTIENTS}
		\Phi(G/N)=\Phi(G)/N=(M_1 \cap M_2 \cap \cdots \cap M_k)/N=M_1/N \cap M_2/N \cap \cdots \cap M_k/N,
	\end{equation}
where the last equality also follows holds by the Fourth Isomorphism Theorem. Hence, $\iota(G/N) \leq k$.  On the other hand, if $\iota(G/N) = k$, where $M_1/N, M_2/N, \ldots, M_k/N$ are maximal subgroups of $G/N$, then Equation~\eqref{eqn:QUOTIENTS} implies 
	\[M_1 \cap M_2 \cap \cdots \cap M_k=\Phi(G)\]
Therefore, $\iota(G)=k$ as desired.
\end{proof}

Assume that $G$ is a nontrivial group.  In some sense, Lemma~\ref{lem:QUOTIENT} shows that the invariant $\iota(G)$ respects quotient groups of $G$.  However, $\iota(G)$ does not respect the subgroup structure of $G$.  In particular, for a subgroup $H$ of $G$, it is possible for $\iota(H)$ to exceed $\iota(G)$.

\begin{ex}
Let $S_7$ denote the symmetric group on $7$ symbols.  There exists a proper subgroup $H$ of $S_7$ such that $\iota(S_7)<\iota(H)$.
\end{ex}

\begin{proof}
The order-42 maximal subgroups
	\[M_1=\langle(1,2,3,4,5,6,7),(2,6,5,7,3,4)\rangle\]
and
	\[M_2=\langle (1,2,7,3,5,6,4),(2,6,5,4,7,3)\rangle\]
of $S_7$ satisfy $M_1 \cap M_2=\{1\}$.  Since $\Phi(S_7)=\{1\}$, it follows that $\iota(S_7)=2$.  Now consider the subgroup
	\[H=\langle (1,2),(3,4),(5,6) \rangle\]
of $S_7$, which isomorphic to the elementary abelian $2$-group $\mathbb Z_2 \times \mathbb Z_2 \times \mathbb Z_2$. The intersection of any two maximal subgroups of $H$ has order $2$.  It follows that $\iota(H) =3$ because $\Phi(H)=\{1\}$.  Therefore,
	\[2=\iota(S_7)<\iota(H)=3,\]
as desired.
\end{proof}

It is possible for $\iota(H)$ to exceed $\iota(G)$ even if $H$ is a normal subgroup of $G$.  For example, we assert that 
	\[2=\iota(S_7)<\iota(A_7)=3.\]
With a few basic properties of intersection numbers of groups established, we continue by establishing the exact values of intersection numbers for nontrivial nilpotent groups.


\section{Nilpotent Groups}\label{sec:NILPOTENT GROUPS}

In this section, we establish the value of $\iota(G)$, where $G$ is a nontrivial nilpotent group. To this end, we first consider $p$-groups. Let $p$ be a prime number, and suppose $P$ is a $p$-group. We can consider the elementary abelian $p$-group $P/\Phi(P)$ as a vector space over the field $\mathbb F_p$; the \textbf{rank} of $P$ is the dimension of $P/\Phi(P)$.

\begin{lemma}\label{lem:OMEGA OF EA p-GROUP}
Let $p$ be a prime number. If $P$ is a nontrivial elementary abelian $p$-group of rank $r$, then $\iota(P)=r$.
\end{lemma}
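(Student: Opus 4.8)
The plan is to translate everything into linear algebra. Since $P$ is elementary abelian, the Frattini subgroup is $\Phi(P)=P^p[P,P]=\{1\}$, so the task reduces to showing that the minimum number of maximal subgroups of $P$ whose intersection is trivial equals $r$. I would regard $P$ as an $r$-dimensional vector space $V$ over $\mathbb F_p$; then the subgroups of $P$ are precisely the $\mathbb F_p$-subspaces of $V$, and a subgroup is maximal exactly when it has index $p$, i.e.\ when it is a hyperplane (a subspace of codimension $1$, equivalently the kernel of a nonzero linear functional $V\to\mathbb F_p$).

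For the lower bound $\iota(P)\geq r$, I would invoke the standard fact that the codimension of an intersection of subspaces is at most the sum of their codimensions. Hence the intersection of any $k$ hyperplanes of $V$ has dimension at least $r-k$, so it can equal $\{0\}=\Phi(P)$ only when $k\geq r$.

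For the upper bound $\iota(P)\leq r$, I would exhibit an explicit family: fixing a basis $e_1,\ldots,e_r$ of $V$, let $H_i$ be the hyperplane spanned by $\{e_j : j\neq i\}$, equivalently the kernel of the $i$-th coordinate functional. Each $H_i$ is a maximal subgroup of $P$, and $H_1\cap H_2\cap\cdots\cap H_r=\{0\}=\Phi(P)$. Combining the two bounds gives $\iota(P)=r$.

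I do not expect a genuine obstacle here: the only points requiring care are recording that $\Phi(P)$ is trivial for an elementary abelian group and justifying the dictionary between maximal subgroups of $P$ and hyperplanes of $V$. One could alternatively deduce the result from the fact that $\maxdim(P)$ equals the largest size of a minimal generating set of $P$ (which is $r$ for $\mathbb Z_p^r$) together with $\mindim(P)=\maxdim(P)$ for nilpotent $P$ noted in the introduction, but the direct linear-algebra argument above is shorter and self-contained.
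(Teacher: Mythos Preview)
Your argument is correct. The upper bound construction---taking the ``coordinate'' hyperplanes $H_i=\operatorname{span}\{e_j:j\neq i\}$---is exactly the family the paper writes down (there as the direct products with the identity factor in the $i$-th slot), so that half is identical.

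The lower bound is where the two presentations diverge slightly. You invoke, in one line, the subadditivity of codimension: an intersection of $k$ hyperplanes has dimension at least $r-k$, hence cannot be zero unless $k\geq r$. The paper does not cite this fact; instead it argues group-theoretically by contradiction. Assuming $r-1$ maximal subgroups $M_1,\dots,M_{r-1}$ intersect trivially, it forms the chain $K_j=\bigcap_{i\leq j}M_i$, writes $p^r=[P:K_1][K_1:K_2]\cdots[K_{r-2}:K_{r-1}]$ as a product of $r-1$ factors, finds some step with $[K_j:K_{j+1}]\geq p^2$, and then uses $P=K_jM_{j+1}$ together with the product formula $|P|=|K_j||M_{j+1}|/|K_{j+1}|$ to reach $p^r\geq p^{r+1}$. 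This is precisely a hands-on proof that each intersection with a further hyperplane drops the dimension by at most one---i.e.\ the codimension inequality you quoted, established from scratch via indices. Your packaging is shorter and more transparent; the paper's version is more self-contained for a reader who may not immediately recall the linear-algebra fact. Either way the mathematical content is the same.
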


\begin{proof}
If $r=1$, then $P$ is cyclic and has only one maximal subgroup, which is the identity subgroup.  Since $\Phi(P)=\{1\}$, it follows that $\iota(P)=1$.  Assume that $r\geq2$, and for each $i \in \{1,2,\ldots,r\}$, define $H_i$ to be the direct product of $r-1$ factors of $\mathbb Z_p$ and the identity subgroup, where the identity subgroup occurs in the $i$-th factor of $H_i$.  In this case, each $H_i$ is isomorphic to a maximal subgroup of $P$ and $\bigcap_{i=1}^r H_i=\{1\}$.  Because $\Phi(P)=\{1\}$ for every elementary abelian $p$-group, we have that $\iota(P) \leq r$.  

If $r=2$, then $\iota(P)=2$ because each maximal subgroup of $P$ contains $\Phi(P)$ as a proper subgroup.  To prove that $\iota(G)=r$ for all $r \geq 3$, it suffices to show that the intersection of any $r-1$ maximal subgroups of $P$ properly contains $\Phi(P)=\{1\}$.  Towards a contradiction, suppose that $M_1, M_2, \ldots, M_{r-1}$ are maximal subgroups of $P$ such that 
	\[M_1 \cap M_2 \cap \cdots \cap M_{r-1}=\Phi(P).\]
For ease of notation, let $j \in \{1,2, \ldots,r-1\}$ and define $K_j=\bigcap_{i=1}^j M_i$.  Since $|P|=p^r$, we have that
	\[p^r=[P:K_{r-1}]=[P:K_1][K_1:K_2][K_2:K_3] \cdots [K_{r-2}:K_{r-1}]\]
and $p^r$ is the product of the aforementioned $r-1$ indices.  Because $K_1=M_1$ is a maximal subgroup of $P$ and $[P:K_1]=p$, there exists $j \in \{2,3,\ldots,r-2\}$ such that $p^2$ divides $[K_j:K_{j+1}]$.  If $K_j \subseteq M_{j+1}$, then $K_j=K_{j+1}$ and $[K_j:K_{j+1}]=1$, which is impossible because $p^2$ divides $[K_j:K_{j+1}]$.  It follows that $K_j \not\subseteq M_{j+1}$, and thus $P=K_jM_{j+1}$ because $M_{j+1}$ is a maximal subgroup of $P$.  Consequently,
	\[p^r=|P|=\frac{|K_j||M_{j+1}|}{|K_j \cap M_{j+1}|}=\frac{|K_j||M_{j+1}|}{|K_{j+1}|} \geq p^2p^{r-1}=p^{r+1},\]
which is impossible, and hence $\iota(P)=r$.
\end{proof}

\begin{lemma}\label{lem:OMEGA OF p-GROUP}
Let $p$ be a prime number. If $P$ is a nontrivial $p$-group of rank $r$, then $\iota(P)=r$.
\end{lemma}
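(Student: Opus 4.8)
The plan is to reduce the general $p$-group case to the elementary abelian case already handled in Lemma~\ref{lem:OMEGA OF EA p-GROUP}, using Proposition~\ref{lem:QUOTIENT}. The key observation is that the Frattini subgroup $\Phi(P)$ is normal in $P$ and, trivially, satisfies $\Phi(P) \subseteq \Phi(P)$, so Proposition~\ref{lem:QUOTIENT} gives $\iota(P) = \iota(P/\Phi(P))$. Since $P$ has rank $r$, the quotient $P/\Phi(P)$ is by definition an elementary abelian $p$-group of dimension $r$ over $\mathbb{F}_p$; if $P$ is nontrivial then $r \geq 1$, so $P/\Phi(P)$ is a nontrivial elementary abelian $p$-group of rank $r$. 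Applying Lemma~\ref{lem:OMEGA OF EA p-GROUP} to $P/\Phi(P)$ yields $\iota(P/\Phi(P)) = r$, and therefore $\iota(P) = r$.

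The steps, in order, are: (1) note that $\Phi(P) \trianglelefteq P$ and $\Phi(P) \subseteq \Phi(P)$, so the hypothesis of Proposition~\ref{lem:QUOTIENT} is met with $N = \Phi(P)$; (2) invoke Proposition~\ref{lem:QUOTIENT} to get $\iota(P) = \iota(P/\Phi(P))$; (3) recall that $P/\Phi(P)$ is elementary abelian of rank $r$ (this is the definition of rank given just before the lemma), and observe it is nontrivial since $P$ is nontrivial (equivalently $r\geq 1$); (4) apply Lemma~\ref{lem:OMEGA OF EA p-GROUP} to conclude $\iota(P/\Phi(P)) = r$; (5) combine to obtain $\iota(P) = r$.

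There is essentially no obstacle here — the lemma is a short corollary of the two preceding results. The only point requiring a moment's care is the degenerate case $r = 1$: then $P$ is cyclic of prime-power order, $P/\Phi(P) \cong \mathbb{Z}_p$ is cyclic with its unique maximal subgroup equal to $\{1\} = \Phi(P/\Phi(P))$, so $\iota(P/\Phi(P)) = 1$, consistent with Lemma~\ref{lem:OMEGA OF EA p-GROUP} for $r=1$. One should also make sure Proposition~\ref{lem:QUOTIENT} is applicable when $N = \Phi(P)$ could equal $\{1\}$ or could equal a proper nontrivial subgroup — in both cases the containment $N \subseteq \Phi(P)$ holds trivially, so nothing breaks. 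Thus the proof is complete modulo citing the earlier results.
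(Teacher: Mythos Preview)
Your proposal is correct and follows essentially the same approach as the paper: apply Proposition~\ref{lem:QUOTIENT} with $N=\Phi(P)$ to reduce to the quotient $P/\Phi(P)$, then invoke Lemma~\ref{lem:OMEGA OF EA p-GROUP} for the elementary abelian case. The paper's proof is simply a one-line version of what you wrote.
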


\begin{proof}
Lemma~\ref{lem:QUOTIENT} and Lemma~\ref{lem:OMEGA OF EA p-GROUP} imply
	\[\iota(P) = \iota(P/\Phi(P))=r,\]
as desired.
\end{proof}

Let $G$ be a nontrivial nilpotent group, and let $\pi(G)$ denote the set of prime numbers that divide the order of $G$.  Since $G$ is the direct product of its Sylow subgroups, we will use Proposition~\ref{lem:OMEGA OF p-GROUP} to establish the exact value of $\iota(G)$.

\begin{thm}\label{thm:NIL GROUPS}
Let $G$ be a nontrivial nilpotent group.  If $\pi(G)=\{p_1,p_2,\ldots,p_k\}$, then
	\[\iota(G)=\sum_{j=1}^k \iota(P_j)=\sum_{j=1}^k r_j,\]
where $P_j$ is a Sylow $p_j$-subgroup of $G$ with rank $r_j$ for each $j \in \{1,2, \ldots, k\}$.
\end{thm}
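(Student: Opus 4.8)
The plan is to reduce the statement to the $p$-group case already handled in Lemma~\ref{lem:OMEGA OF p-GROUP}, using the fact that a finite nilpotent group is the internal direct product of its Sylow subgroups together with the structure of the maximal subgroups of such a direct product. Write $G = P_1 \times P_2 \times \cdots \times P_k$, where $P_j$ is the Sylow $p_j$-subgroup of $G$. The upper bound $\iota(G) \leq \sum_j \iota(P_j) = \sum_j r_j$ is immediate from Proposition~\ref{prop:DIRECT PRODUCTS} (applied inductively) together with Lemma~\ref{lem:OMEGA OF p-GROUP}, so the real content is the matching lower bound.

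For the lower bound, the key observation is that every maximal subgroup $M$ of $G = P_1 \times \cdots \times P_k$ has the form $P_1 \times \cdots \times P_{i-1} \times N_i \times P_{i+1} \times \cdots \times P_k$ for some index $i$ and some maximal subgroup $N_i$ of $P_i$; this is the standard description of maximal subgroups of a finite nilpotent group (a maximal subgroup has prime index, hence contains every Sylow subgroup for the other primes). Now suppose $M_1, M_2, \ldots, M_m$ are maximal subgroups of $G$ with $M_1 \cap \cdots \cap M_m = \Phi(G) = \Phi(P_1) \times \cdots \times \Phi(P_k)$. Partition these maximal subgroups according to which coordinate they constrain: for each $j$, let $S_j$ be the set of those $M_t$ of the form $\prod_{i \neq j} P_i \times N$ with $N$ maximal in $P_j$. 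Projecting the intersection onto the $j$-th coordinate, one sees that the intersection of the corresponding maximal subgroups $N$ of $P_j$ (over $M_t \in S_j$) must equal $\Phi(P_j)$ — because any prime $p_j$ coordinate of an element in the overall intersection is unconstrained by the $M_t$ outside $S_j$. Hence $|S_j| \geq \iota(P_j) = r_j$ for every $j$, and since the $S_j$ are disjoint, $m \geq \sum_j r_j$. Taking the minimum over all such families gives $\iota(G) \geq \sum_j r_j$.

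The main obstacle, and the step that needs the most care, is making the projection argument airtight: one must verify that if $M_t \in S_j$ contributes the maximal subgroup $N_t \leq P_j$, then
	\[
	\Bigl(\bigcap_{t : M_t \in S_j} N_t\Bigr) \times \prod_{i \neq j} \Phi(P_i)
	\]
is exactly what the full intersection $M_1 \cap \cdots \cap M_m$ looks like in the $j$-th slot, i.e. that the maximal subgroups outside $S_j$ impose no condition on the $P_j$-component. This follows because each $M_t \notin S_j$ contains all of $P_j$ as its $j$-th factor, so the $j$-th coordinate of $\bigcap_t M_t$ is precisely $\bigcap_{M_t \in S_j} N_t$; combined with $\bigcap_t M_t = \Phi(G) = \prod_i \Phi(P_i)$, this forces $\bigcap_{M_t \in S_j} N_t = \Phi(P_j)$. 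With the maximal-subgroup classification for nilpotent groups in hand, the remaining bookkeeping is routine. (One should also record the trivial but necessary remark that $\Phi(G) = \prod_j \Phi(P_j)$ for a finite nilpotent group, which is already used implicitly in the proof of Proposition~\ref{prop:DIRECT PRODUCTS}.)
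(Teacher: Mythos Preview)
Your proposal is correct and follows essentially the same route as the paper: both arguments rest on the classification of maximal subgroups of a finite nilpotent group (each has prime index $p_j$ and corresponds to a maximal subgroup of the factor $P_j$), and both combine this with Lemma~\ref{lem:OMEGA OF p-GROUP} to obtain the count. The paper compresses the lower bound into ``induction on $k$,'' whereas you spell out the projection/partition argument for all coordinates simultaneously, but these are the same proof written at different levels of detail.
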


\begin{proof}
Since $G\cong P_1 \times P_2 \times \cdots \times P_k$ is nilpotent, all maximal subgroups of $G$ have prime index in $G$.  For each $j \in \{1,2, \ldots,k\}$, the maximal subgroups of $G$ with index $p_j$ are in one-to-one correspondence with the maximal subgroups of $P_j$. The result now follows from Proposition~\ref{lem:OMEGA OF p-GROUP} and induction on $k$.
\end{proof}

The forthcoming corollary of Theorem~\ref{thm:NIL GROUPS} highlights a major difference between the invariants $\iota(G)$ and $\sigma(G)$.  In particular, Tomkinson \cite{TOMKINSON1997} proved that there is no group with covering number $7$, and the subsequent theorem identifies all covering numbers less than 130.

\begin{thm}\emph{(Garonzi \cite{GARONZI2013B}; Garonzi, Kappe and Swartz \cite{GKS2018})}
The following list identifies exactly which integers less than 130 that are not covering numbers: $1$, $2$, $7$, $11$, $19$, $21$, $22$, $25$, $27$, $34$, $35$, $37$, $39$, $41$, $43$, $45$, $47$, $49$, $51$, $52$, $53$, $55$, $56$, $58$, $59$, $61$, $66$, $69$, $70$, $75$, $76$, $77$, $78$, $79$, $81$, $83$, $87$, $88$, $89$, $91$, $93$, $94$, $95$, $96$, $97$, $99$, $100$, $101$, $103$, $105$, $106$, $107$, $109$, $111$, $112$, $113$, $115$, $116$, $117$, $118$, $119$, $120$, $123$, $124$, $125$.
\end{thm}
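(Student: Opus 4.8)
The plan is to reduce the problem to a classification of the groups that actually \emph{realize} each covering number, and then to compute $\sigma$ on the resulting finite list of candidates. Following Detomi and Lucchini \cite{DETOMI-LUCCHINI2008}, call a noncyclic group $G$ \emph{$\sigma$-elementary} if $\sigma(G/N) > \sigma(G)$ for every nontrivial normal subgroup $N$ of $G$ (with $\sigma(G/N)$ regarded as undefined, hence larger, when $G/N$ is cyclic). Because $\sigma(G/N) \ge \sigma(G)$ for every normal $N$, one sees that if $G$ is noncyclic and $N$ is chosen maximal subject to $\sigma(G/N) = \sigma(G)$, then $G/N$ is $\sigma$-elementary with the same covering number; hence an integer $n$ is a covering number if and only if $n = \sigma(G)$ for some $\sigma$-elementary group $G$. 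The first step is therefore to record the structural constraints that Detomi and Lucchini impose on $\sigma$-elementary groups — in particular that such a group is monolithic and that its socle is a direct power of a single (possibly abelian) simple group — so that, for the fixed bound $\sigma(G) \le 129$, only finitely many isomorphism types remain to be examined.

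The second step handles the solvable $\sigma$-elementary groups via Tomkinson's theorem \cite{TOMKINSON1997}: if $G$ is solvable and noncyclic, then $\sigma(G) = p^{a} + 1$, where $p^{a}$ is the order of the smallest chief factor of $G$ admitting more than one complement. Consequently the solvable covering numbers are exactly the integers of the form $p^{a}+1$ with $p$ prime and $a \ge 1$; intersecting this set with $\{1,2,\dots,129\}$ produces an explicit sublist (for instance $3, 4, 5, 6, 8, 9, 10, 12, \dots, 126, 128, 129$). This step is essentially bookkeeping once the formula is invoked, since each such value is realized by an affine group $\mathbb{Z}_p^{a} \rtimes H$ and the ranges of $p$ and $a$ are bounded by $p^{a} \le 128$. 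Note already that gaps such as $7, 11, 19, 21, 22$ are not of the form $p^{a}+1$, so the burden of deciding them falls entirely on the nonsolvable analysis.

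The third and decisive step is the nonsolvable case. Here I would use the structure theorem from Step~1 to enumerate the finitely many nonsolvable $\sigma$-elementary groups $G$ whose socle $S^{m}$ is small enough that $\sigma(G) \le 129$ is even conceivable; both the order of the simple group $S$ and the multiplicity $m$ are bounded, because covering a group forces the covering subgroups to have comparatively small index, a quantity that grows with $|G|$. For each surviving candidate — principally almost simple groups built on $S \in \{A_5, A_6, A_7, \mathrm{PSL}_2(q), \dots\}$ together with a handful of product-type extensions — one computes $\sigma(G)$ exactly. Producing the \emph{upper} bound means exhibiting an explicit covering by maximal subgroups, read off from the maximal subgroup structure and frequently confirmed by computation, as with $\sigma(S_6) = 13$ \cite{AAS2007} and the linear group with $\sigma = 15$ \cite{BFS1999}. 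Producing the matching \emph{lower} bound is the main obstacle: one must show that no smaller family of subgroups covers $G$, which is done by counting elements of a suitably chosen order (or fixed-point-free elements of a natural action) that lie in very few maximal subgroups, so that each covering subgroup can account for only a bounded share of them. These inequalities, carried out family by family using the classification of finite simple groups and detailed knowledge of their maximal subgroups, constitute the technical heart of \cite{GARONZI2013B,GKS2018}.

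Finally, I would assemble the set of all covering numbers at most $129$ as the union of the solvable values $\{p^{a}+1\}$ from Step~2 and the finitely many nonsolvable values from Step~3, and then take its complement inside $\{1,2,\dots,129\}$ to recover the stated list. The genuinely delicate part of the verification is confirming that no nonsolvable group fills any of the gaps left open by the solvable values, and this once more reduces to the lower-bound element-counting arguments of Step~3 applied uniformly across the complete candidate list.
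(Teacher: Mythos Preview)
The paper does not prove this theorem; it merely quotes the result from \cite{GARONZI2013B} and \cite{GKS2018} as background to contrast covering numbers with intersection numbers, so there is no ``paper's own proof'' to compare against. Your proposal is a faithful high-level outline of the strategy actually carried out in those references: reduce to $\sigma$-elementary groups via Detomi--Lucchini, dispose of the solvable case by Tomkinson's formula $\sigma(G)=p^{a}+1$, and then exhaustively bound and compute $\sigma$ for the finitely many monolithic nonsolvable candidates, with the lower bounds coming from element-counting over maximal subgroups. As a sketch this is accurate; just be aware that Step~3 is where essentially all the work lives, and your write-up defers that work back to the cited papers rather than supplying it, so what you have is an abstract of the published proof rather than an independent argument.
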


The corollary below proves that for each $k \in \mathbb Z^+$, there exists a group $G$ such that $\iota(G)=k$.

\begin{cor}
If $G$ is a nontrivial cyclic group, then $\iota(G)=|\pi(G)|$.
\end{cor}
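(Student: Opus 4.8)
The plan is to derive this directly from Theorem~\ref{thm:NIL GROUPS}. A nontrivial cyclic group $G$ is abelian, hence nilpotent, so Theorem~\ref{thm:NIL GROUPS} applies: writing $\pi(G) = \{p_1, p_2, \ldots, p_k\}$, we have $\iota(G) = \sum_{j=1}^k r_j$, where $r_j$ is the rank of a Sylow $p_j$-subgroup $P_j$ of $G$. So the only thing to check is that each $r_j = 1$, which would give $\iota(G) = k = |\pi(G)|$.

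First I would observe that since $G$ is cyclic, each Sylow subgroup $P_j$ is a subgroup of a cyclic group and is therefore itself cyclic. A nontrivial cyclic $p$-group has a unique maximal subgroup, so its Frattini quotient $P_j/\Phi(P_j)$ has order $p_j$, i.e.\ is one-dimensional as an $\mathbb{F}_{p_j}$-vector space; hence the rank $r_j$ equals $1$. (Equivalently, one could invoke Lemma~\ref{lem:OMEGA OF p-GROUP}: a nontrivial cyclic $p$-group has intersection number $1$, since its only maximal subgroup is $\Phi(P_j)$ itself.) Substituting $r_j = 1$ for all $j$ into the formula from Theorem~\ref{thm:NIL GROUPS} yields
\[
	\iota(G) = \sum_{j=1}^k 1 = k = |\pi(G)|,
\]
as claimed.

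I do not anticipate a genuine obstacle here; the corollary is essentially an unwinding of the nilpotent formula in the simplest case. The only point requiring a word of care is the edge case where $G$ has prime power order $p^m$ with $m \geq 1$: then $k = 1$ and $\iota(G) = 1$, consistent with the fact that a cyclic $p$-group has a single maximal subgroup equal to its Frattini subgroup. This also confirms the remark preceding the corollary: taking $G = \mathbb{Z}_{p_1 p_2 \cdots p_k}$ for distinct primes $p_1, \ldots, p_k$ produces a group with $\iota(G) = k$, so every positive integer is realized as an intersection number.
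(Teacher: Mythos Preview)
Your proof is correct and follows essentially the same route as the paper: apply Theorem~\ref{thm:NIL GROUPS} to the cyclic (hence nilpotent) group $G$ and then observe that each Sylow subgroup, being a nontrivial cyclic $p$-group, has rank~$1$. The paper phrases this last step as $\iota(\mathbb{Z}_{p_j^{e_j}})=1$ via the $p$-group lemmas, but the content is identical.
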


\begin{proof}
Assume that $|G|=p_1^{e_1}p_2^{e_2}\cdots p_k^{e_k}$ is the prime factorization of $|G|$, where $e_j \in \mathbb Z^+$ for all $j\in\{1,2,\ldots,k\}$. By the Fundamental Theorem of Finite Abelian Groups,
	\[G \cong \mathbb Z_{p_1^{e_1}} \times \mathbb Z_{p_2^{e_2}} \times \cdots \times \mathbb Z_{p_k^{e_k}}\]
and Lemma~\ref{lem:OMEGA OF EA p-GROUP} implies that $\iota(\mathbb Z_{p_j^{e_j}})=1$ for each  $j\in\{1,2,\ldots,k\}$. The result now follows from Theorem~\ref{thm:NIL GROUPS}.
\end{proof}

With the values of $\iota(G)$ established for all nontrivial nilpotent groups $G$, it is natural to investigate this invariant for non-nilpotent groups. In the next section, we consider the intersection numbers of dihedral groups, dicyclic groups, and symmetric groups.


\section{Non-Nilpotent Groups}\label{sec:NON-NILPOTENT}

In this section, we investigate the intersection numbers of three infinite families of non-nilpotent groups.  Let $D_{2n}$ denote the dihedral group of order $2n$, where $n \geq 3$. It is well-known that $D_{2n}$ is nilpotent exactly when $n$ is a power of $2$. If $n$ is a power of $2$, then $D_{2n}$ is of rank $2$, and Proposition~\ref{lem:OMEGA OF p-GROUP} implies that $\iota(D_{2n})=2$.  To establish the values of $\iota(D_{2n})$ when $n$ is not a power of $2$, we utilize the following presentation of the dihedral group:
	\[D_{2n} = \left\langle r,s: r^n=1=s^2,\ r^{-1}=srs \right\rangle.\]
	
\begin{prop}\label{prop:DIHEDRAL}
If $n=p_1^{e_1}p_2^{e_2}\cdots p_k^{e_k}$ is the prime factorization of $n$, where $e_1,e_2,\ldots, e_k \in \mathbb Z^+$, then $\iota(D_{2n})=k+1$.
\end{prop}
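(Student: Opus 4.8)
The plan is to first pin down the two objects that control everything: the Frattini subgroup and the maximal subgroups of $D_{2n}$. Using the presentation $D_{2n}=\langle r,s: r^n=1=s^2,\ r^{-1}=srs\rangle$, the maximal subgroups are exactly the cyclic subgroup $\langle r\rangle$ of index $2$, together with, for each prime $p\mid n$, the $p$ dihedral subgroups $\langle r^{p},r^{a}s\rangle$ with $0\le a<p$, each of index $p$. Intersecting all of these gives $\Phi(D_{2n})=\langle r^{m}\rangle$, where $m=p_1p_2\cdots p_k$ is the radical of $n$: the reflections are killed already by taking two conjugate dihedral maximals for a fixed prime, and the rotations that survive are $\langle r\rangle\cap\bigcap_{p\mid n}\langle r^{p}\rangle=\langle r^{m}\rangle$. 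Since $D_{2n}/\langle r^{m}\rangle\cong D_{2m}$ and $\langle r^m\rangle\subseteq\Phi(D_{2n})$, Proposition~\ref{lem:QUOTIENT} gives $\iota(D_{2n})=\iota(D_{2m})$, so I may assume $n=m=p_1\cdots p_k$ is squarefree and $\Phi(D_{2n})=\{1\}$.

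For the upper bound $\iota(D_{2n})\le k+1$, I would exhibit $k+1$ maximal subgroups meeting trivially: take $\langle r\rangle$ together with $\langle r^{p_j},s\rangle$ for $j=1,\dots,k$. Intersecting with $\langle r\rangle$ removes every reflection, and the rotations that remain lie in $\bigcap_{j=1}^{k}\langle r^{p_j}\rangle=\langle r^{p_1\cdots p_k}\rangle=\langle r^{n}\rangle=\{1\}$. Hence these $k+1$ subgroups intersect in $\{1\}=\Phi(D_{2n})$.

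For the lower bound $\iota(D_{2n})\ge k+1$, I would show that \emph{any} $k$ maximal subgroups $M_1,\dots,M_k$ of $D_{2n}$ have nontrivial intersection, hence cannot equal $\Phi(D_{2n})=\{1\}$. Let $S$ be the set of primes $p\mid n$ for which some $M_i$ is a dihedral maximal of index $p$. If $|S|<k$, then some prime $p_\ell$ is missing, and since $M_i\cap\langle r\rangle$ is either all of $\langle r\rangle$ or a subgroup $\langle r^{p}\rangle$ with $p\in S$, the rotation part of the intersection contains $\langle r^{L}\rangle$ with $L=\prod_{p\in S}p$ a proper divisor of $n$, so it is nontrivial. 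If $|S|=k$, then $S=\{p_1,\dots,p_k\}$ and the $k$ subgroups $M_1,\dots,M_k$ must be a dihedral maximal of index $p_i$ for each $i$ (in particular none of them is $\langle r\rangle$); writing $M_i=\langle r^{p_i},r^{a_i}s\rangle$, the Chinese Remainder Theorem produces a $j$ with $j\equiv a_i\pmod{p_i}$ for every $i$, so the reflection $r^{j}s$ lies in $\bigcap_{i=1}^{k}M_i$, which is again nontrivial. Combining the two bounds yields $\iota(D_{2n})=k+1$.

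The routine inputs are the classification of maximal subgroups of $D_{2n}$ and the identification of $\Phi(D_{2n})$, both standard. The genuinely load-bearing step — and the one I expect to require the most care — is the lower bound dichotomy: the pigeonhole argument on the set $S$ of realized primes, together with the CRT construction of a common reflection in the extremal case $|S|=k$, is what forces the extra ``$+1$'' beyond the $k$ coming from the primes.
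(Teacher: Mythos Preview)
Your proof is correct and follows essentially the same approach as the paper: the same list of maximal subgroups, the same $k+1$ subgroups for the upper bound, and the same lower-bound dichotomy (a missing prime leaves a nontrivial rotation, while all primes present yields a common reflection via CRT). The only cosmetic difference is that you first invoke Proposition~\ref{lem:QUOTIENT} to reduce to squarefree $n$ with $\Phi(D_{2n})=\{1\}$, whereas the paper works directly with general $n$ and $\Phi(D_{2n})=\langle r^{p_1\cdots p_k}\rangle$; this slightly streamlines the bookkeeping but is not a genuinely different route.
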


\begin{proof}
If $n$ is a power of $2$, then $\iota(D_{2n})=2$ by Proposition~\ref{lem:OMEGA OF p-GROUP}.  Thus, we assume that $n$ is not a power of $2$ for the remainder of the proof.  Every maximal subgroup of $D_{2n}$ is either:
	\begin{enumerate}[label=$($\alph*$)$]\setlength\itemsep{2pt}
		\item the cyclic subgroup $\langle r \rangle$ of index $2$; or
		\item the dihedral subgroup $\langle r^{p_i}, r^{j_i}s\rangle$ of index $p_i$ for some $i \in \{1,2, \ldots,k\}$ and $j_i \in \{0,1,\ldots,p_i-1\}$.
	\end{enumerate}
The intersection of these $1+p_1+p_2+\cdots+p_k$ maximal subgroups of $D_{2n}$ is $\Phi(D_{2n})=\big\langle r^{p_1p_2\cdots p_k}\big\rangle$. Notice that the intersection of $\langle r \rangle$ with $\bigcap_{i=1}^k \langle r^{p_i},s \rangle$ is equal to $\Phi(D_{2n})$, and as a result $\iota(D_{2n}) \leq k+1$. 

Towards a contradiction, suppose $M_1,M_2,\ldots,M_k$ are maximal subgroups of $D_{2n}$ that satisfy
	\[M_1 \cap M_2 \cap \cdots \cap M_k = \Phi(D_{2n}).\]
Further, assume that there exists $i \in \{1,2,\ldots,k\}$ such that $\langle r^{p_i},r^{j_i}s \rangle \not \in \{M_1,M_2,\ldots, M_k\}$ for all $j_i \in \{0,1,\ldots,p_i-1\}$.  In this case, 
	\[ \big\langle r^{\frac{p_1p_2\cdots p_k}{p_i}} \big\rangle <M_1 \cap M_2 \cap \cdots \cap M_k = \Phi(D_{2n}) =\big\langle r^{p_1p_2\cdots p_k}\big\rangle,\]
which is impossible.  Therefore, after a possible relabeling, assume that $M_i=\langle r^{p_i}, r^{j_i}s \rangle$.  The Chinese Remainder Theorem guarantees that the system of linear congruences given by
	\begin{align*}
		\ell & \equiv  j_1  \mod {p_1^{e_1}}\\
		\ell &  \equiv  j_2  \mod {p_2^{e_2}}\\
		& \, \ \vdots \\
		\ell & \equiv  j_k  \mod {p_k^{e_k}}
	\end{align*}
has a unique solution modulo $n=p_1^{e_1}p_2^{e_2}\cdots p_k^{e_k}$.  Hence, 
	\[r^\ell s \in M_1 \cap M_2 \cap \cdots \cap M_k=\Phi(D_{2n})=\big\langle r^{p_1p_2\cdots p_k}\big\rangle,\]
a final contradiction.  The result now follows.
\end{proof}

Next, we turn our attention to the dicyclic groups.  The dicyclic group $Q_{4n}$ of order $4n$, where $n \geq 2$, has presentation 
	\[Q_{4n} = \left\langle x,y: x^{2n}=1, x^n=y^2, x^{-1}=y^{-1}xy \right\rangle.\]
If $n$ is a power of $2$, then $Q_{4n}$ is also known as the generalized quaternion group.  In this case, $Q_{4n}$ is a nilpotent group of rank $2$, and Proposition~\ref{lem:OMEGA OF p-GROUP} implies that $\iota(Q_{4n})=2$.  The following proposition also considers the non-nilpotent group $Q_{4n}$ (i.e., considers the case when $n$ is not a power of $2$).  Its proof is omitted do to the extreme similarities to the proof of Proposition~\ref{prop:DIHEDRAL}.

\begin{prop}
If $n=p_1^{e_1}p_2^{e_2}\cdots p_k^{e_k}$ is the prime factorization of $n$, where $e_1,e_2,\ldots, e_k \in \mathbb Z^+$, then $\iota(Q_{4n})=k+1$.
\end{prop}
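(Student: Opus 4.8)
The plan is to mirror the argument for Proposition~\ref{prop:DIHEDRAL}, replacing the rotation subgroup and the reflections of $D_{2n}$ with their analogues in $Q_{4n}$. First I would classify the maximal subgroups of the non-nilpotent dicyclic group $Q_{4n}$ (so $n$ is not a power of $2$). The cyclic subgroup $\langle x \rangle$ has index $2$ and is maximal; and for each prime $p_i \mid n$ and each $j_i \in \{0,1,\ldots,p_i-1\}$, the subgroup $\langle x^{p_i}, x^{j_i}y \rangle$ is a dicyclic (or, when $2n/p_i$ is appropriate, generalized quaternion) subgroup of index $p_i$, and these exhaust the maximal subgroups. I would note that $\Phi(Q_{4n}) = \langle x^{p_1 p_2 \cdots p_k} \rangle$, paralleling the dihedral computation, since the intersection of all the listed maximal subgroups is this cyclic group.

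Next I would establish the upper bound $\iota(Q_{4n}) \leq k+1$ by exhibiting the explicit family: intersecting $\langle x \rangle$ with $\bigcap_{i=1}^k \langle x^{p_i}, y \rangle$ yields $\langle x^{p_1 \cdots p_k}\rangle = \Phi(Q_{4n})$, because the first factor forces membership in $\langle x \rangle$ and the remaining factors force the exponent to be divisible by each $p_i$. For the lower bound, I would argue by contradiction that no $k$ maximal subgroups $M_1,\ldots,M_k$ can intersect in $\Phi(Q_{4n})$. As in the dihedral case, if for some prime $p_i$ none of the $M_j$ is of the form $\langle x^{p_i}, x^{j_i}y\rangle$, then $\langle x^{(p_1\cdots p_k)/p_i}\rangle$ is contained in the intersection, strictly larger than $\Phi(Q_{4n})$, a contradiction; so after relabeling $M_i = \langle x^{p_i}, x^{j_i}y\rangle$ for each $i$. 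Then the Chinese Remainder Theorem produces an $\ell$ with $\ell \equiv j_i \pmod{p_i^{e_i}}$ for all $i$, whence $x^\ell y$ lies in every $M_i$ but not in $\langle x^{p_1\cdots p_k}\rangle$, the final contradiction.

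The main obstacle—and the only place where care beyond transcribing the dihedral proof is needed—is the classification of the maximal subgroups of $Q_{4n}$, in particular verifying that every subgroup containing a reflection-type element $x^j y$ together with a suitable power of $x$ has the claimed dicyclic structure and index, and that these together with $\langle x \rangle$ really are all the maximal subgroups. One must also check the edge behavior: since $x^n = y^2$, an element $x^j y$ has order $4$, so the subgroups $\langle x^{p_i}, x^{j_i}y\rangle$ contain $x^n = y^2$ automatically, and the counting $[Q_{4n} : \langle x^{p_i}, x^{j_i}y\rangle] = p_i$ must be confirmed. Once this structural bookkeeping is in place, the index arithmetic, the $\Phi$ computation, and the Chinese Remainder Theorem step are word-for-word the same as in Proposition~\ref{prop:DIHEDRAL}, which is exactly why the authors declare the proof "omitted due to the extreme similarities."
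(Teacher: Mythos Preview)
Your proposal is correct and follows precisely the approach the paper intends: the authors explicitly omit the proof ``due to the extreme similarities to the proof of Proposition~\ref{prop:DIHEDRAL},'' and your sketch is exactly that transcription, with the appropriate care taken to verify the maximal-subgroup classification and the order/index arithmetic in the presence of the relation $y^2=x^n$.
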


To conclude this section, we give an upper bound on the value of $\iota(S_n)$, where $S_n$ denotes the symmetric group on $n$ symbols. 

\begin{prop}\label{prop:SYMMETRIC BOUND}
If $n \geq 4$ is an integer, then $\iota(S_n) \leq \left\lfloor\frac{n+8}{4}\right\rfloor$.
\end{prop}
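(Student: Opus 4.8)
The plan is to construct an explicit family of maximal subgroups of $S_n$ whose intersection is trivial (recall $\Phi(S_n)=\{1\}$ for $n \geq 4$, since $A_n$ is a non-abelian simple or small group and $S_n$ has no nontrivial normal subgroup contained in every maximal subgroup), using roughly $\lfloor (n+8)/4\rfloor$ of them. Since $\iota(S_n)$ is the \emph{minimum} number of maximal subgroups whose intersection is $\Phi(S_n)=\{1\}$, it suffices to exhibit one such family of the claimed size; no lower bound is needed. The natural maximal subgroups to use are the \textbf{intransitive} ones of the form $S_k \times S_{n-k}$ (stabilizers of a $k$-subset of $\{1,2,\ldots,n\}$) together with one copy of $A_n$. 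The key observation is that an element $\pi \in S_n$ lies in a point-stabilizer-type subgroup $(S_A \times S_{\{1,\ldots,n\}\setminus A})$ exactly when $\pi$ setwise fixes $A$; so to kill all of $S_n$ by intersecting such subgroups, I need a collection of subsets $A_1,\ldots,A_m$ of $\{1,\ldots,n\}$ such that the only permutation stabilizing every $A_i$ setwise is the identity — in other words, the $A_i$'s \emph{separate points} in the strong sense that the induced "coordinate" map is injective on $S_n$, equivalently that the Boolean combinations of the $A_i$ are all singletons.

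First I would recall that a family of subsets $A_1,\ldots,A_m$ of an $n$-set has the property that the joint stabilizer $\bigcap_i \mathrm{Stab}(A_i)$ is trivial if and only if the atoms of the Boolean subalgebra generated by $\{A_1,\ldots,A_m\}$ are all singletons; with $m$ subsets one gets up to $2^m$ atoms, so information-theoretically $m \approx \log_2 n$ subsets suffice for the stabilizer to be trivial, which is far better than $\lfloor(n+8)/4\rfloor$. However, there is a catch: the subgroup $S_k \times S_{n-k}$ is maximal in $S_n$ only when $k \neq n-k$, i.e. $n \neq 2k$; when $n=2k$ the subgroup $S_k \times S_k$ has index $2$ over a larger subgroup (it sits inside the imprimitive wreath-type subgroup), so I must avoid using a half-size subset, or handle $n$ even separately. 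Also the bound $\lfloor(n+8)/4\rfloor$ grows linearly, not logarithmically, which suggests the intended construction is cruder — perhaps using subsets of a \emph{fixed small size} (pairs or triples), where a subset of size $2$ contributes a stabilizer $S_2 \times S_{n-2}$, and one needs about $n/2$ disjoint pairs to pin down all points, then trims using $A_n$ and overlaps. I would therefore aim for the following concrete scheme: partition most of $\{1,\ldots,n\}$ into blocks of size roughly $4$, use for each block a pair of overlapping subsets that between them isolate all four points within that block (two subsets per block of $4$ points isolates all $4$ via the four atoms), giving about $n/4$ subsets and hence about $n/4$ maximal subgroups, plus a bounded number of extra maximal subgroups (one copy of $A_n$, and one or two to handle the leftover $n \bmod 4$ points and the $n$-even parity issue), landing at $\lfloor(n+8)/4\rfloor$.

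The steps, in order: (1) reduce to finding subsets $A_1,\ldots,A_m \subseteq \{1,\ldots,n\}$, none of size $n/2$, whose joint setwise stabilizer in $S_n$ is trivial, since then $\bigcap_i (S_{A_i}\times S_{A_i^c}) = \{1\}$ and each factor is a maximal subgroup; (2) observe that if the stabilizer is trivial but we have slack, we may not even need $A_n$, but including $A_n$ lets us weaken the requirement — the intersection of the point-stabilizer-subgroups need only be a subgroup of $A_n$ of... actually we want the intersection to be exactly $\{1\}$, so including $A_n$ only helps if the subset-stabilizer intersection is $\{1, \tau\}$ for an odd involution $\tau$, which we can arrange to be generated by a single transposition; (3) build the $A_i$ explicitly by grouping the $n$ symbols into consecutive blocks of $4$ (with a remainder block of size $1,2,3$ absorbed), assigning to the $t$-th block of four symbols $\{4t-3,4t-2,4t-1,4t\}$ two subsets chosen so their restriction to the block gives all four atoms as singletons and whose union over all blocks still has globally distinct atoms — this needs a small bookkeeping argument that combining block-wise-separating families yields a globally-separating family, which is immediate since the atoms refine; (4) count: roughly $2\lceil n/4\rceil$ subsets is too many, so instead I would use a smarter encoding — assign to symbol $j$ its index $j-1 \in \{0,\ldots,n-1\}$ written in binary and let $A_i$ be the set of symbols whose $i$-th binary digit is $1$, needing $\lceil \log_2 n\rceil$ subsets — but then worry about the $n/2$-size exclusion, which fails precisely for the top bit when $n$ is a power of $2$; perturb one element's label to break this.

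The hard part will be reconciling the clean logarithmic construction with the stated linear bound $\lfloor(n+8)/4\rfloor$: either the authors deliberately use a simpler (suboptimal) construction based on pairs/triples to keep the maximal-subgroup bookkeeping transparent and to sidestep the $S_k \times S_{n-k}$ maximality failure at $n=2k$, or there is an additional constraint I am not seeing (for instance, wanting the maximal subgroups to all be of a single conjugacy-controlled type, or wanting the argument to be uniform in $n$ without case analysis on $n \bmod$ small numbers). I expect the cleanest route that matches the bound is: take the maximal subgroups to be $S_1 \times S_{n-1}$ (point stabilizers) is useless alone since one needs $n-1$ of them; instead take $S_2 \times S_{n-2}$ subgroups for $\approx n/4$ cleverly chosen pairs whose union-closure isolates every point in pairs, then finish with $A_n$ and at most two cleanup subgroups — and the genuine obstacle is proving that $\lfloor n/4\rfloor$-ish pairs plus $A_n$ really do intersect to $\{1\}$, which reduces to a combinatorial lemma about perfect-matching-like systems of pairs on $n$ vertices whose "double cover" has trivial automorphism group, handled by an explicit zig-zag construction of the pairs and an induction on $n$ in steps of $4$.
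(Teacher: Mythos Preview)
Your proposal is correct in outline but takes a genuinely different route from the paper. The paper does not use $A_n$, nor small pair-subsets, nor a binary encoding. Instead it fixes $a=\lceil (n+1)/2\rceil$, $b=n-a$, and constructs $\lfloor(n+8)/4\rfloor$ explicit copies of $S_a\times S_b$ inside $S_n$, all of the same isomorphism type, obtained by writing $\sigma=(1,2,\ldots,n)$ and partitioning $\{1,\ldots,n\}$ into an $a$-block and a $b$-block in $\lfloor(n+8)/4\rfloor$ different ways (cyclic strides of a base partition, with separate bookkeeping for $n$ odd and $n$ even). The verification that the intersection is trivial is exactly your atom criterion in disguise: for any two distinct symbols $k,\ell$, some chosen partition separates them into different blocks.

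Your framework is conceptually cleaner: $\bigcap_i\mathrm{Stab}_{S_n}(A_i)=\{1\}$ iff the Boolean atoms of the $A_i$ are all singletons, and with binary labels $m=\lceil\log_2 n\rceil$ subsets already achieve this. Since $\lceil\log_2 n\rceil\le\lfloor(n+8)/4\rfloor$ for every $n\ge 4$, this would prove a strictly stronger bound than the proposition asserts; the paper itself notes (see its Table~\ref{table:INTERSECTION NUMBERS OF Sn}) that the stated bound is not sharp. What the paper's approach buys is that all subsets have the same size $a\ne n/2$, so maximality of each $S_a\times S_b$ is immediate and uniform; what your approach buys is a much smaller count and a one-line triviality argument.

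Two points to tighten. First, the half-size exclusion is more delicate than you indicate: it is not only for $n$ a power of~$2$ that some bit-set has size $n/2$ (for $n=6$ the least-significant-bit set $\{1,3,5\}$ already has size $3=n/2$; and for $n=2^m$ \emph{every} bit-set has size $n/2$, not just the top one). Your ``perturb one label'' fix therefore needs to be carried out for general $n$; one clean repair is to replace each offending $A_i$ by $A_i\cup\{x\}$ or $A_i\setminus\{x\}$ and add at most one extra separating subset, keeping the count $O(\log n)$ and hence well under $\lfloor(n+8)/4\rfloor$. Second, commit to a single construction: you float three (blocks of four, $A_n$ plus cleanup, binary encoding) without finishing any. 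The binary encoding alone, patched for the $|A_i|=n/2$ issue, already proves the proposition and more; the other routes are unnecessary detours.
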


\begin{proof}
The O'Nan-Scott Theorem (see \cite{LIEBECK-PRAEGER-SAXL1988} and \cite{SCOTT1980}) classifies the maximal subgroups of $S_n$; it states that $S_k \times S_{n-k}$ is isomorphic to a maximal subgroup of $S_n$ provided $1 \leq k < \frac n2$.  If $a=\left\lceil \frac{n+1}{2}\right \rceil$ and $b=\left\lfloor \frac {n-1}{2}\right\rfloor$, then $1 \leq b < \frac n2$ and $a+b=n$.  Therefore, $S_a \times S_b$ is isomorphic to a maximal subgroup of $S_n$.  We claim that there exist $\left\lfloor\frac{n+8}{4}\right\rfloor$ subgroups of $S_n$ that are isomorphic to $S_a\times S_b$ whose intersection is $\Phi(S_n)=\{1\}$.  We will use the permutation $\sigma=(1,2,\ldots,n)$ to build these maximal subgroups of $S_n$, and the following construction depends on the parity of $n$.

First assume that $n$ is an odd integer.  Let $\ell_m$ denote the integer
	\[\ell_m=\begin{cases}
		0 & \text{if } \gcd(n,j)=1\\
		\left\lfloor \frac{m\cdot \gcd(n,j)}{n} \right\rfloor & \text{if } \gcd(n,j) \geq 2,
	\end{cases}\]
where $m \in \{1,2,\ldots,n-1\}$. For each $j\in \{1, 2, \ldots, \left\lfloor\frac{n+8}{4}\right\rfloor-1\}$, define the permutations $\alpha_j$ and $\beta_j$ on $\{1,2,\ldots,n\}$ by 
	\[\alpha_j=\big(1,\sigma^{{2j}+\ell_1}(1),\sigma^{{4j}+\ell_2}(1),\ldots, \sigma^{2(a-1)j+\ell_{a-1}}(1)\big)\big(\sigma^{2aj+\ell_a}(1),\ldots, \sigma^{2(n-1)j+\ell_{n-1}}(1)\big)\]
and 
	\[\beta_j=\big(1,\sigma^{2j+\ell_1}(1)\big)\big(\sigma^{2aj+\ell_a}(1),\sigma^{2(a+1)j+\ell_{a+1}}(1)\big).\]
Observe that $\alpha_j$ is a product of a cycle of length $a$ and a cycle of length $b$, and $\beta_j$ is a product of $2$ transpositions. Moreover, the supports of the cycles that appear in the cycle decomposition of  $\alpha_j$ or $\beta_j$ are disjoint.  Since disjoint cycles commute and $S_k=\langle(1,2 \ldots,k),(1,2)\rangle$ for all $k \in \mathbb Z^+$, we have $\langle \alpha_j,\beta_j\rangle \cong S_a \times S_b$ and $\langle \alpha_j,\beta_j\rangle$ is a maximal subgroup of $S_n$ for each $j\in\{1, 2, \ldots, \left\lfloor\frac{n+8}{4}\right\rfloor-1\}$.  Additionally, define the permutations $\alpha_0$ and $\beta_0$ on $\{1,2,\ldots,n\}$ by
	\[\alpha_0=\big(1,\sigma(1),\sigma^2(1),\ldots, \sigma^{a-1}(1)\big)\big(\sigma^a(1),\sigma^{a+1}(1),\ldots, \sigma^{n-1}(1)\big),\]
and
	\[\beta_0=\big(1,\sigma(1)\big)\big(\sigma^a(1),\sigma^{a+1}(1)\big).\]
Notice that $\langle \alpha_0, \beta_0\rangle$ is also a maximal subgroup of $S_n$ because $\langle \alpha_0, \beta_0\rangle \cong S_a\times S_b$.

For a contradiction, suppose
	\[\bigcap_{j=0}^{\left\lfloor\frac{n+8}{4}\right\rfloor-1} \langle \alpha_j,\beta_j\rangle \neq \{1\},\]
and choose
	\[\rho \in \bigcap_{j=0}^{\left\lfloor\frac{n+8}{4}\right\rfloor-1} \langle \alpha_j,\beta_j\rangle\]
such that $\rho(k)=\ell$ for some $k, \ell \in \{1,2,\ldots,n\}$ and $k \neq \ell$.  In this case, the integers $k$ and $\ell$ must lie in the support of the same cycle appearing the cycle decomposition of $\alpha_j$ for all $j \in \{0,1,\ldots,\lfloor\frac{n+8}{4}\rfloor-1\}$.  However, by construction there exists $j \in \{0,1,\ldots,\left\lfloor\frac{n+8}{4}\right\rfloor-1\}$ such that $k$ and $\ell$ lie in different cycles in the cycle decomposition of $\alpha_j$, a contradiction.  Therefore,
		\[\bigcap_{j=0}^{\left\lfloor\frac{n+8}{4}\right\rfloor-1} \langle \alpha_j,\beta_j\rangle = \{1\} = \Phi(S_n)\]
and $\iota(S_n) \leq \left\lfloor\frac{n+8}{4}\right\rfloor$ when $n$ is odd.
	
Now assume that $n$ is even.  If $n\in\{4,6\}$, then it is easy to verify that $\iota(S_n) \leq \left\lfloor\frac{n+8}{4}\right\rfloor =3$. Thus, we assume that $n \geq 8$ for the remainder of the proof.  For each $j \in \{1,n-1\}$, define the permutations $\alpha_j$, $\beta_j$ and $\gamma_j$ on $\{1,2,\ldots,n\}$ by
	\[\alpha_j=\big(1,\sigma^{j}(1),\sigma^{2j}(1),\ldots, \sigma^{(b-1)j}(1)\big)\big(\sigma^{bj}(1),\sigma^{(b+1)j}(1),\ldots, \sigma^{(n-1)j}(1)\big),\]
	\[\beta_j=\big(1,\sigma^{j}(1)\big) \quad \text{ and } \quad \gamma_j=\big(\sigma^{bj}(1),\sigma^{(b+1)j}(1)\big).\]
Additionally, let $k_n$ be the largest integer less than $\frac12(\frac n2+3)$ that is relatively prime to $\frac n2+3$, and let 
	\[\ell_m=\begin{cases}
		0 & \text{if } \gcd(n,j)=1\\
		\left\lfloor \frac{m\cdot \gcd(n,j)}{n} \right\rfloor & \text{if } \gcd(n,j) \geq 2,
	\end{cases}\]
where $m \in \{1,2,\ldots,n-1\}$.  For each
	\[j \in\left\{k_n, k_n+1,k_n+2, \ldots, k_n+\left\lfloor\frac{n-4}{4}\right\rfloor\right\},\]
define the permutations $\alpha_j$, $\beta_j$ and $\gamma_j$ on $\{1,2,\ldots,n\}$ by
	\[\alpha_j=\big(1,\sigma^{j+\ell_1}(1),\sigma^{2j+\ell_2}(1),\ldots, \sigma^{(a-1)j+\ell_{a-1}}(1)\big)\big(\sigma^{aj+\ell_a}(1),\ldots, \sigma^{(n-1)j+\ell_{n-1}}(1)\big),\]
	\[\beta_j=\big(1,\sigma^{j+\ell_1}(1)\big)\quad \text{ and } \quad \gamma_j=\big(\sigma^{aj+\ell_a}(1),\sigma^{(a+1)j+\ell_{a+1}}(1)\big).\]
A similar argument to that above proves that $\langle \alpha_j,\beta_j,\gamma_j\rangle \cong S_a \times S_b$ and
	\[\bigcap_j \langle \alpha_j,\beta_j,\gamma_j\rangle = \{1\} = \Phi(S_n).\]
Since
	\[\left\lfloor\frac{n-4}{4}\right\rfloor=\left\lfloor\frac{n+8}{4}\right\rfloor-3,\]
the cardinality of 
	\[\{1,n-1\}\cup\left\{k_n, k_n+1,k_n+2, \ldots, k_n+\left\lfloor\frac{n-4}{4}\right\rfloor\right\}\]
equals $\left\lfloor\frac{n+8}{4}\right\rfloor$.  It follows that $\iota(S_n) \leq \left\lfloor\frac{n+8}{4}\right\rfloor$ when $n$ is even, as desired.
\end{proof}

As an example of the proof of Proposition~\ref{prop:SYMMETRIC BOUND}, we assume that $n=11$ and will construct $4=\left\lfloor\frac{11+8}{4}\right\rfloor$ maximal subgroups of $S_{11}$ whose intersection is the identity subgroup to prove that $\iota(S_{11}) \leq 4$.  If $n=11$, then $a=6$, $b=5$, and $\sigma=(1,2,\ldots,11)$; notice that $\ell_{2m}=0$ for all $m\in \{1,2,\ldots,8\}$.  For each $j \in \{1,2,3\}$, the permutations $\alpha_j$ and $\beta_j$ on $\{1,2,\ldots, 11\}$ are defined by
	\[\alpha_j=\big(1,\sigma^{2j}(1),\sigma^{4j}(1),\sigma^{6j}(1),\sigma^{8j}(1), \sigma^{10j}(1)\big)\big(\sigma^{12j}(1),\sigma^{14j}(1),\sigma^{16j}(1),\sigma^{18j}(1),\sigma^{20j}(1)\big)\]
and 
	\[\beta_j=\big(1,\sigma^{2j}(1)\big)\big(\sigma^{12j}(1),\sigma^{14j}(1)\big);\]
that is,
	\begin{eqnarray*}
		\alpha_1 & = & \big(1,\sigma^2(1),\sigma^4(1),\sigma^6(1),\sigma^8(1), \sigma^{10}(1)\big)\big(\sigma^{12}(1),\sigma^{14}(1),\sigma^{16}(1),\sigma^{18}(1),\sigma^{20}(1)\big)\\
		& = & (1,3,5,7,9,11)(2,4,6,8,10), \\ [6pt]
		\beta_1 & = & (1,3)(2,4),\\[6pt]
		\alpha_2 & = & \big(1,\sigma^4(1),\sigma^8(1),\sigma^{12}(1),\sigma^{16}(1), \sigma^{20}(1)\big)\big(\sigma^{24}(1),\sigma^{28}(1),\sigma^{32}(1),\sigma^{36}(1),\sigma^{40}(1)\big)\\
		& = &(1,5,9,2,6,10)(3,7,11,4,8),\\ [6pt]
		\beta_2 & = & (1,5)(3,7),\\ [6pt]
		\alpha_3 & = & \big(1,\sigma^6(1),\sigma^{12}(1),\sigma^{18}(1),\sigma^{24}(1), \sigma^{30}(1)\big)\big(\sigma^{36}(1),\sigma^{42}(1),\sigma^{48}(1),\sigma^{54}(1),\sigma^{60}(1)\big)\\
		& = & (1,7,2,8,3,9)(4,10,5,11,6), \\[6pt]
		\beta_3 & = & (1,7)(4,10).
	\end{eqnarray*}
Also, the permutations $\alpha_0$ and $\beta_0$ on $\{1,2,\ldots,11\}$ are defined by
	\begin{eqnarray*}
		\alpha_0 & = & \big(1,\sigma(1),\sigma^2(1),\sigma^3(1),\sigma^4(1),\sigma^5(1)\big)\big(\sigma^6(1),\sigma^7(1),\sigma^8(1),\sigma^9(1),\sigma^{10}(1)\big) \\ 
		& = & (1,2,3,4,5,6)(7,8,9,10,11)
	\end{eqnarray*}
and
	\[\beta_0 \ = \ \big(1,\sigma(1)\big)\big(\sigma^6(1),\sigma^7(1)\big)\  = \ (1,2)(7,8).\]
The subgroups $\langle\alpha_1,\beta_1\rangle$, $\langle\alpha_2,\beta_2\rangle$, $\langle\alpha_3,\beta_3\rangle$ and $\langle\alpha_0,\beta_0\rangle$ of $S_{11}$ are each isomorphic to $S_6\times S_5$, and thus are maximal subgroups of $S_{11}$.  Since 
	\[ \langle\alpha_0,\beta_0\rangle \cap \langle\alpha_1,\beta_1\rangle \cap \langle\alpha_2,\beta_2\rangle \cap \langle\alpha_3,\beta_3\rangle =\{1\}\]
and $\Phi(S_{11})=\{1\}$, we see that $\iota(S_{11}) \leq 4$, as desired.  We remark that this bound is not best possible; for example, we assert that $\iota(S_{11})=3$, as seen in Table~\ref{table:INTERSECTION NUMBERS OF Sn}. The exact values of $\iota(S_n)$ can also be seen in Table~\ref{table:INTERSECTION NUMBERS OF Sn} for $n \in\{2,3,\ldots,10\}$.

\begin{table}[H]
{\renewcommand{\arraystretch}{1.4}
	\begin{tabular}{|c|c|c|c|c|c|c|c|c|c|c|} \hline
		$n$ & 2 & 3 & 4 & 5 & 6 & 7 & 8 & 9 & 10 & 11\\ \hline
		$\big\lfloor\frac{n+8}{4}\big\rfloor$ & 2 & 2 & 3 & 3 & 3 & 3 & 4 & 4 & 4 & 4\\ \hline
		$\iota(S_n)$ & $1$ & $2$ & $3$ & $3$ & $3$ & $2$ & $3$ & $3$ & $3$ & $3$\\\hline
	\end{tabular}}
\caption{Intersection numbers of $S_n$ for $n \in \{2,3,\ldots,11\}$}
\label{table:INTERSECTION NUMBERS OF Sn}
\end{table}


\section{Inconjugate Intersection Number}\label{sec:INN}

Our work above that established the intersection numbers of certain nontrivial groups has compelled us to define a second group invariant, which we call the \textit{inconjugate intersection number}.  Recall that two subgroups $H$ and $K$ of a group $G$ are \textbf{conjugate} subgroups if there exists $g \in G$ such that $gHg^{-1}=K$.  If two subgroups of $G$ are not conjugate, then we say they are \textbf{inconjugate} subgroups of $G$.  For a nontrivial group $G$, we define the \textbf{inconjuage intersection number} of $G$, denoted $\hat\iota(G)$, to be the \emph{minimum} number of inconjugate maximal subgroups whose intersection equals $\Phi(G)$; set $\hat\iota(G) = \infty$ if $G$ has no inconjugate maximal subgroups whose intersection equals $\Phi(G)$.  

As an example, consider the Frobenius group of order 42, denoted by $\mathbb Z_7 \rtimes \mathbb Z_6$.  The intersection of any two inconjugate maximal subgroups is a nontrivial $p$-group, where $p \in \{2,3,7\}$. Since $\Phi(\mathbb Z_7 \rtimes \mathbb Z_6)=\{1\}$ and the intersection of any three inconjugate maximal subgroups is the identity subgroup, we have that $\hat\iota(\mathbb Z_7 \rtimes \mathbb Z_6)=3$. Additionally, for a group $G$, $\hat\iota(G)$ is not always finite.  For instance, consider the Frobenius group of order $20$, denoted by $\mathbb Z_5 \rtimes \mathbb Z_4$.  The intersection of any two inconjugate maximal subgroups contains a subgroup of order $2$.  However, $\Phi(\mathbb Z_5 \rtimes \mathbb Z_4)=\{1\}$, and as a result $\hat\iota(\mathbb Z_5 \rtimes \mathbb Z_4)=\infty$.  

\begin{prop}\label{INN NIL}
If $G$ is a nontrivial nilpotent group, then $\iota(G)=\hat\iota(G)$. 
\end{prop}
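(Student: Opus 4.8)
The plan is to exploit the fact that, in a nilpotent group, every maximal subgroup is normal, so that two maximal subgroups are conjugate precisely when they are equal. Once this is in hand, the only apparent difference between the families counted by $\iota(G)$ and those counted by $\hat\iota(G)$ — namely that the latter requires the chosen maximal subgroups to be pairwise inconjugate rather than merely to form a list — disappears.

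First I would record the standard fact that if $G$ is nilpotent and $M$ is a maximal subgroup of $G$, then $M \trianglelefteq G$: since a proper subgroup of a nilpotent group is properly contained in its normalizer, we have $M \subsetneq N_G(M)$, and maximality of $M$ forces $N_G(M) = G$. In particular $gMg^{-1} = M$ for every $g \in G$, so two distinct maximal subgroups of $G$ are never conjugate. Hence a finite family $\{M_1,\ldots,M_t\}$ of maximal subgroups of $G$ is pairwise inconjugate if and only if its members are pairwise distinct.

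Next I would observe that in evaluating $\iota(G)$ one may always assume the maximal subgroups in an optimal family are distinct: deleting a repeated entry from a family whose intersection equals $\Phi(G)$ leaves the intersection unchanged while not increasing the number of subgroups. Therefore $\iota(G)$ equals the minimum size of a family of \emph{distinct} maximal subgroups of $G$ whose intersection is $\Phi(G)$. By the previous paragraph, such families are exactly the families of pairwise inconjugate maximal subgroups of $G$ whose intersection is $\Phi(G)$; the minimum size of such a family is by definition $\hat\iota(G)$, which is in particular finite. Combining these observations yields $\iota(G) = \hat\iota(G)$.

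There is no genuine obstacle in this argument: essentially all of the content is the normality of maximal subgroups in a nilpotent group, together with the bookkeeping remark that an optimal family for $\iota$ can be taken to consist of distinct subgroups. The one point to treat carefully is making that bookkeeping remark explicit, so that the comparison with the definition of $\hat\iota(G)$ — which has distinctness built in via inconjugacy — is legitimate.
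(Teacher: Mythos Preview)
Your proof is correct and follows the same approach as the paper: the paper's proof is simply the observation that every maximal subgroup of a nilpotent group is normal, hence self-conjugate, from which the result follows immediately. You have spelled out the bookkeeping (distinctness of an optimal family for $\iota$) more carefully than the paper does, but the underlying idea is identical.
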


\begin{proof}
Every maximal subgroup $M$ in a nilpotent group $G$ is normal; that is, $gMg^{-1}=M$ for all $g \in G$. The result now follows.
\end{proof}

Proposition~\ref{prop:DIHEDRAL} (i.e., the dihedral group $D_{2n}$) demonstrates that the converse to Proposition~\ref{INN NIL} is false because the maximal subgroups of $D_{2n}$ used in the proof of Proposition~\ref{prop:DIHEDRAL} to establish that $\iota(D_{2n}) = k+1$ are inconjugate.

\begin{prop}\label{INN OF D2n}
If $n=p_1^{e_1}p_2^{e_2}\cdots p_k^{e_k}$ is the prime factorization of $n$, where $e_1,e_2,\ldots, e_k \in \mathbb Z^+$, then $\hat\iota(D_{2n})=k+1$.
\end{prop}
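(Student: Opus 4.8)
The plan is to mirror the structure of the proof of Proposition~\ref{prop:DIHEDRAL}, checking at each step that the maximal subgroups involved are inconjugate, and exploiting the fact that $\hat\iota(D_{2n}) \geq \iota(D_{2n}) = k+1$ already gives the lower bound for free. So the entire content of the argument lies in producing $k+1$ \emph{pairwise inconjugate} maximal subgroups of $D_{2n}$ whose intersection is $\Phi(D_{2n}) = \langle r^{p_1 p_2 \cdots p_k}\rangle$. First I would recall the classification of maximal subgroups of $D_{2n}$ from the proof of Proposition~\ref{prop:DIHEDRAL}: the cyclic subgroup $\langle r\rangle$ of index $2$, and the dihedral subgroups $\langle r^{p_i}, r^{j_i}s\rangle$ of index $p_i$. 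I would then observe that $\langle r\rangle$, being the unique maximal subgroup of index $2$ (it is normal and characteristic), is inconjugate to every dihedral maximal subgroup, and that for distinct primes $p_i \neq p_{i'}$ any maximal subgroup of index $p_i$ is inconjugate to any maximal subgroup of index $p_{i'}$ since conjugate subgroups have equal order.

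Next I would take the specific family already used in Proposition~\ref{prop:DIHEDRAL}: the subgroup $\langle r\rangle$ together with $\langle r^{p_i}, s\rangle$ for $i = 1, 2, \ldots, k$ (i.e. choosing $j_i = 0$ for each $i$). These are $k+1$ subgroups, one of index $2$ and one of index $p_i$ for each distinct prime $p_i$, hence pairwise inconjugate by the order/index observation above. The proof of Proposition~\ref{prop:DIHEDRAL} already shows their intersection equals $\Phi(D_{2n})$. This yields $\hat\iota(D_{2n}) \leq k+1$. Combined with $\hat\iota(D_{2n}) \geq \iota(D_{2n}) = k+1$ from the inequality $\iota(G) \leq \hat\iota(G)$ (which holds because any witnessing family of inconjugate maximal subgroups is in particular a family of maximal subgroups), we conclude $\hat\iota(D_{2n}) = k+1$.

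The one subtlety to handle carefully is the case where $n$ is a power of $2$, i.e. $k=1$ and $D_{2n}$ is nilpotent: there Proposition~\ref{INN NIL} gives $\hat\iota(D_{2n}) = \iota(D_{2n}) = 2 = k+1$ directly, so it should be dispatched first, exactly as in Proposition~\ref{prop:DIHEDRAL}. I expect the main (and only real) obstacle to be the bookkeeping of inconjugacy — specifically making sure the argument ``conjugate subgroups have the same order, and our $k+1$ subgroups have $k+1$ distinct orders'' is stated cleanly, since $|\langle r\rangle| = n$ while $|\langle r^{p_i}, s\rangle| = 2n/p_i$, and these are distinct across the chosen family. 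Everything else is a verbatim citation of the computation in Proposition~\ref{prop:DIHEDRAL}, so the proof can be kept quite short.
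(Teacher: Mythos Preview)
Your approach is correct and is exactly what the paper does: the paper's entire justification is the sentence preceding the proposition, observing that the maximal subgroups used in the proof of Proposition~\ref{prop:DIHEDRAL} are already pairwise inconjugate, whence $\hat\iota(D_{2n})\le k+1$, and the reverse inequality comes from $\hat\iota\ge\iota=k+1$.

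One small correction to your bookkeeping: when $n$ is even, $\langle r\rangle$ is \emph{not} the unique maximal subgroup of index $2$ (the dihedral subgroup $\langle r^{2},s\rangle$ also has index $2$), and accordingly $|\langle r\rangle|=n=2n/2=|\langle r^{2},s\rangle|$, so your ``$k+1$ distinct orders'' argument breaks down precisely there. Your earlier observation that $\langle r\rangle$ is normal---hence forms a conjugacy class by itself---is the correct reason it is inconjugate to every other maximal subgroup, and that argument works uniformly; just drop the order-counting claim.
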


Assume that $p \geq 5$ is a prime number, and let $r \in \mathbb Z_p^\times$ have order $p-1$.  The group defined by the presentation
	\[F_p=\langle x,y:x^p=1=y^{p-1},y^{-1}xy=x^r\rangle\]
is called a \textbf{Frobenius group}.  In this case, $F_p\cong \mathbb Z_p \rtimes \mathbb Z_{p-1}$ and $F_p$ has order $p(p-1)$.

\begin{prop}\label{INN OF Fp}
Let $p\geq 5$ be a prime number. If $F_p\cong \mathbb Z_p \rtimes \mathbb Z_{p-1}$ denotes the Frobenius group of order $p(p-1)$, then $\iota(F_p)=2$ and 
	\[\hat\iota(F_p)=\begin{cases} k+1 & \text{if } p-1 \text{ is a square-free integer}\\  \infty & \text{otherwise},\end{cases}\]
where $k$ is the number of distinct prime divisors of $p-1$.
\end{prop}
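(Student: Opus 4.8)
The plan is to first pin down the maximal subgroups of $F_p = \langle x \rangle \rtimes \langle y \rangle$ and their conjugacy classes. Since $F_p$ is a Frobenius group with kernel $K = \langle x \rangle \cong \mathbb{Z}_p$ and complement $C = \langle y \rangle \cong \mathbb{Z}_{p-1}$, the maximal subgroups fall into two types: (i) the kernel $\langle x \rangle$ itself, which is normal of index $p-1$ (hence its own conjugacy class of size $1$), and (ii) subgroups of the form $\langle x \rangle_q \rtimes C$ (where no such thing if $q\nmid p-1$) — more precisely, for each prime $q \mid p-1$, the subgroup $\langle x, y^q \rangle$ of index $q$, together with all its conjugates. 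Actually the cleanest description: the maximal subgroups not containing $K$ are exactly the conjugates of the complements' overgroups; one shows that for each prime divisor $q$ of $p-1$ there is one conjugacy class of maximal subgroups of index $q$, each isomorphic to $\mathbb{Z}_p \rtimes \mathbb{Z}_{(p-1)/q}$ when $q$ is the relevant quotient, all $p$ of them conjugate under $K$. I would verify that $\Phi(F_p) = \{1\}$ since the intersection of $\langle x\rangle$ with any complement-overgroup of index a prime is trivial on the $\langle x \rangle$ side, and intersecting the index-$q$ classes kills all of $\langle y \rangle$.

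For $\iota(F_p) = 2$: take $M_1 = \langle x \rangle$ (normal, index $p-1$) and $M_2 = \langle y \rangle$ extended to a maximal subgroup — wait, more carefully, take $M_1 = \langle x \rangle$ and $M_2$ any maximal subgroup of index $q$ for the smallest prime $q \mid p-1$; then $M_1 \cap M_2 = \langle x^? \rangle \cap \cdots$. The right choice: $M_1 = \langle x\rangle$ and $M_2 = $ a maximal subgroup containing $\langle y \rangle$, i.e. $\langle x^{?}, y\rangle$ won't be maximal unless index is prime. Let me instead argue $\iota(F_p)\le 2$ by $M_1 = \langle x\rangle$ (the kernel) and $M_2$ = a complement's normalizer-type maximal subgroup of prime index $q$: their intersection has order dividing $\gcd(p, (p-1)/q \cdot p)=p\cdot\gcd(1,(p-1)/q)$ on the kernel and is trivial on the complement side. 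Cleanly: any two maximal subgroups of distinct "types" intersect in $\{1\}$ because one is contained in $\langle x\rangle$-direction-trivial and the other is $\langle x\rangle$. Since no single maximal subgroup equals $\{1\}$ (each is nontrivial), $\iota(F_p) = 2$.

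The substance is the inconjugate statement. Here I would show: the only maximal subgroups that are pairwise inconjugate are $\langle x \rangle$ together with one representative from each of the classes of index-$q$ subgroups, one class per prime $q \mid p-1$ — so there are exactly $k+1$ conjugacy classes of maximal subgroups. If $p-1$ is square-free, then $p - 1 = q_1 q_2 \cdots q_k$, and intersecting $\langle x\rangle$ with representatives $M_{q_i}$ (index $q_i$) for all $i$: on the complement side we kill $\langle y \rangle$ entirely since $\bigcap_i \langle y^{q_i}\rangle = \langle y^{q_1\cdots q_k}\rangle = \{1\}$ (using square-freeness so the $q_i$ are distinct primes and their product is $p-1$), and $\langle x \rangle$ forces the kernel part trivial — so $\hat\iota(F_p) \le k+1$, and one checks no smaller inconjugate family works (dropping $\langle x\rangle$ leaves the kernel intersection nontrivial; dropping some $M_{q_i}$ leaves $\langle y^{\prod_{j\ne i} q_j}\rangle \neq \{1\}$ surviving, as in the proof of Proposition~\ref{prop:DIHEDRAL}). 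If $p-1$ is \emph{not} square-free, say $q^2 \mid p-1$, then the intersection of \emph{all} inconjugate maximal subgroups still contains $\langle y^{(p-1)/q}\rangle \neq \{1\}$, because there is no maximal subgroup of index $q^2$ (maximal subgroups have prime index on the complement) and the available ones of index $q$ only remove $\langle y \rangle / \langle y^q\rangle$; hence no inconjugate family intersects to $\{1\} = \Phi(F_p)$, giving $\hat\iota(F_p) = \infty$. The main obstacle is the careful bookkeeping of which subgroups of $\langle y \rangle$-order arise as intersections with maximal subgroups and confirming there is genuinely only one conjugacy class of maximal subgroups per prime divisor $q$ (this uses that $\langle y \rangle$ acts on $K \setminus \{1\}$ transitively via a Frobenius action, so all complements of a fixed order are $K$-conjugate); once that is in hand the rest mirrors Proposition~\ref{prop:DIHEDRAL} via the same divisibility argument.
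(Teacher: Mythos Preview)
Your overall strategy---classify the maximal subgroups and their conjugacy classes, then mimic the dihedral argument---is exactly right, but your classification of the maximal subgroups of $F_p$ is wrong, and this breaks both computations.

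The kernel $K=\langle x\rangle\cong\mathbb{Z}_p$ is \emph{not} a maximal subgroup when $p\ge 5$: it has index $p-1$, and since $F_p/\langle x\rangle\cong\mathbb{Z}_{p-1}$ is a cyclic group of composite order, there are proper subgroups strictly between $\langle x\rangle$ and $F_p$. Conversely, the subgroups $\langle x,y^{q_i}\rangle\cong\mathbb{Z}_p\rtimes\mathbb{Z}_{(p-1)/q_i}$ of index $q_i$ each contain the normal subgroup $\langle x\rangle$, hence are themselves normal (the quotient $F_p/\langle x\rangle$ is abelian); they do \emph{not} come in $K$-conjugacy classes of size $p$ as you claim. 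What you are missing entirely is the one genuinely non-normal conjugacy class: the $p$ conjugates of the complement $\langle y\rangle\cong\mathbb{Z}_{p-1}$, each of index $p$. So the correct picture is $k+1$ conjugacy classes: one class of size $p$ consisting of the complements, and $k$ singleton classes $\{\langle x,y^{q_i}\rangle\}$.

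With this in hand the paper's argument runs as follows. For $\iota(F_p)=2$, take two \emph{distinct conjugate complements}; by the Frobenius property their intersection is trivial. (Your attempt using $M_1=\langle x\rangle$ fails because $\langle x\rangle$ is not maximal.) For $\hat\iota$, the intersection of the $k$ normal maximal subgroups is $\langle x,\,y^{q_1q_2\cdots q_k}\rangle$; when $p-1$ is square-free this equals $\langle x\rangle$, and meeting it with any complement gives $\{1\}$, yielding $\hat\iota(F_p)\le k+1$, with the lower bound exactly as you sketch. When $p-1$ is not square-free, $y^{q_1\cdots q_k}\neq 1$, and this element survives in the intersection of \emph{all} $k+1$ inconjugate representatives (it lies in every $\langle x,y^{q_i}\rangle$ and in the complement $\langle y\rangle$), so $\hat\iota(F_p)=\infty$. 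Your outline becomes correct once you replace ``$\langle x\rangle$'' everywhere by ``a complement $\langle y\rangle$'' and recognise that the index-$q_i$ maximal subgroups are normal.
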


\begin{proof}
Write $q=p-1$ and suppose $q=q_1^{e_1}q_2^{e_2}\cdots q_k^{e_k}$ is the prime factorization of $q$, where $e_1,e_2,\ldots, e_k \in \mathbb Z^+$. Every maximal subgroup of $F_p$ is isomorphic to:
	\begin{enumerate}[label=$($\alph*$)$]\setlength\itemsep{2pt}
		\item a cyclic subgroup $\mathbb Z_q$ of index $p$; or
		\item a semi-direct product $\mathbb Z_p \rtimes \mathbb Z_{q/q_i}$ of index $q_i$ for some $i \in \{1,2, \ldots,k\}$.
	\end{enumerate}
The intersection of these $p+k$ maximal subgroups of $F_p$ is $\Phi(F_p)=\{1\}$. Moreover, the intersection of any two distinct conjugate copies of $\mathbb Z_q$ equals $\{1\}$.  It follows that $\iota(F_p)=2$ because each maximal subgroup of $F_p$ contains $\Phi(F_p)$ as a proper subgroup.

Notice that $F_p$ has $1+k$ conjugacy classes of maximal subgroups: $1$ conjugacy class of size $p$ (containing the $p$ conjugates of $\mathbb Z_q$) and $k$ conjugacy classes of size $1$ (each containing the normal subgroup $\mathbb Z_p \rtimes \mathbb Z_{q/q_i}$ of $F_p$ for some $i \in \{1,2,\ldots,k\}$).  Let $\mathcal M$ denote this conjugacy class of size $p$. First, suppose $q$ is a square-free integer so that $e_i=1$ for all $i \in \{1,2,\ldots,k\}$.  The intersection of
	\begin{equation}\label{eqn:Fp}
		\bigcap_{i=1}^k (\mathbb Z_p \rtimes \mathbb Z_{q/q_i})= \mathbb Z_p,
	\end{equation}
and any maximal subgroup in $\mathcal M$ is the identity subgroup.  Thus, $\hat\iota(F_p) \leq k+1$.    Equation~\ref{eqn:Fp} implies that the intersection of any inconjugate maximal subgroups of $F_p$ whose intersection is $\Phi(F_p)=\{1\}$ must contain a maximal subgroup in $\mathcal M$.  If $Q=\{q_1,q_2,\ldots,q_k\}$, then for each $M \in \mathcal M$ and $j\in \{1,2,\ldots,k\}$,
	\[M \cap \bigcap_{Q\backslash\{q_j\}} (\mathbb Z_p \rtimes \mathbb Z_{q/q_i})\]
contains a subgroup isomorphic to $\mathbb Z_{q_j}$.  It follows that $\hat\iota(F_p)>k$, and thus $\hat\iota(F_p)= k+1$ provided $q$ is square-free.  Now, suppose $q$ is not square-free; assume that $e_i\geq2$ for some $i\in \{1,2,\ldots,k\}$.  The intersection of any inconjuage maximal subgroups will contain a subgroup isomorphic to $\mathbb Z_{q_i}$.  Since $\Phi(F_p)=\{1\}$, it follows that $\hat\iota(F_p)=\infty$.
\end{proof}


\section{Discussion and Open Questions}\label{sec:OPEN QUESTIONS}

In this section, we pose four open questions that involve the intersection number and the inconjugate intersection number of the group $G$. Recall that if $G$ is a nontrivial nilpotent group, then $\iota(G)=\hat\iota(G)$ by Proposition~\ref{INN NIL}.  However, Propositions~\ref{INN OF D2n} and~\ref{INN OF Fp} established the existence of infinity families of non-nilpotent, solvable groups $G$ such that $\iota(G)=\hat\iota(G)$ and $\iota(G)<\hat\iota(G)<\infty$, repsectively. Naturally, we ask the following question.

\begin{question}
What structural characteristics of $G$ imply that $\iota(G)=\hat\iota(G)$?
\end{question}

Of course, $\iota(G) \leq \hat\iota(G)$ for all nontrivial groups $G$, and Proposition~\ref{INN OF Fp} established an infinite family of groups $G$ that demonstrates $\hat\iota(G)$ is not always finite. 

\begin{question}
What structural characteristics of $G$ imply that $\hat\iota(G)=\infty$?
\end{question}

Recall that $\pi(G)$ denotes the set of prime numbers that divide the order of $G$, and let $m(G)$ denote the set of maximal subgroups of $G$.  In \cite{LAUDERDALE2013}, the fourth author of this article proved the following result about the relationship between the sizes of $\pi(G)$ and $m(G)$.  

\begin{thm}\emph{\cite{LAUDERDALE2013}}\label{LOWERBOUND}
If $G$ is a finite group, then $|m(G)| \geq |\pi(G)|$.  Furthermore, equality holds if and only if $G$ is cyclic.
\end{thm}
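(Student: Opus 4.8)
The plan is to prove both directions of Theorem~\ref{LOWERBOUND} by an induction on $|G|$, using the structural information carried by the Frattini quotient $G/\Phi(G)$. The key observation is that maximal subgroups of $G$ correspond bijectively to maximal subgroups of $G/\Phi(G)$, and that $\pi(G) = \pi(G/\Phi(G))$ need \emph{not} hold in general --- but primes dividing $|\Phi(G)|$ contribute to $|\pi(G)|$ while not obviously affecting $|m(G)|$, so I must be careful. For the inequality itself, the cleanest route is: for each prime $p \in \pi(G)$, a Sylow $p$-subgroup $P$ is not contained in $\Phi(G)$ (otherwise $p \nmid [G:\Phi(G)]$ forces $p \nmid |G|$ by the nilpotency of $\Phi(G)$ and a Sylow argument), hence $P$ lies in some maximal subgroup $M_p$ with $p \nmid [G:M_p]$ impossible --- rather, there is a maximal subgroup $M_p$ with $p \mid [G:M_p]$. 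The point is to exhibit, for distinct primes $p \neq q$ in $\pi(G)$, distinct maximal subgroups: choose $M_p$ to be a maximal subgroup \emph{not} containing a fixed Sylow $p$-subgroup $P_p$ (such exists since $P_p \not\subseteq \Phi(G)$). If $M_p = M_q$ for $p \neq q$, then this common maximal subgroup avoids both a full Sylow $p$-subgroup and a full Sylow $q$-subgroup, which one rules out by an index/order count once one knows a maximal subgroup can "miss" the Sylow structure at only one prime at a time --- this is essentially the content that a maximal subgroup has index a prime power when $G$ is solvable, but in general requires more care and is the main obstacle (see below).

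A more robust approach that sidesteps the solvability issue: induct on $|G|$. If $\Phi(G) \neq \{1\}$, pick a prime $p \mid |\Phi(G)|$ and a minimal normal subgroup $N$ of $G$ with $N \subseteq \Phi(G)$; then pass to $G/N$. Here $|m(G)| = |m(G/N)|$ (Fourth Isomorphism Theorem, as used in the proof of Lemma~\ref{lem:QUOTIENT}), while $|\pi(G/N)| \geq |\pi(G)| - 1$ with equality only if $p$ divides no other part of $|G|$. Combining with the inductive hypothesis $|m(G/N)| \geq |\pi(G/N)|$ gives $|m(G)| \geq |\pi(G)| - 1$, which is off by one, so this reduction alone is insufficient; one must instead argue directly that $G$ already has a maximal subgroup of index divisible by $p$, namely any maximal subgroup containing a complement-type configuration --- but since $p \mid |\Phi(G)|$, \emph{every} maximal subgroup has index coprime to $p$, and yet $p \in \pi(G)$ still needs a witness. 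The resolution is that the witness for $p$ comes from counting: $|G| = |\Phi(G)| \cdot |G/\Phi(G)|$ and the primes in $\pi(\Phi(G))$ are handled by noting $\Phi(G)$ nilpotent and $G/\Phi(G)$ inheriting enough maximal subgroups. I would therefore reduce to the \textbf{Frattini-free case} $\Phi(G) = \{1\}$ first, then prove $|m(G)| \geq |\pi(G)|$ there, and finally lift.

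In the Frattini-free case, the core argument is: for each $p \in \pi(G)$, since $O_p(G) \not\subseteq \Phi(G) = \{1\}$ is not forced, instead use that a Sylow $p$-subgroup $P_p \neq \{1\}$ is not contained in the intersection of all maximal subgroups, so there is a maximal subgroup $M_p$ with $P_p \not\subseteq M_p$. I claim the assignment $p \mapsto M_p$ can be made injective. Suppose $M := M_p = M_q$ with $p \neq q$. Then $P_p \not\subseteq M$ and $P_q \not\subseteq M$, so $|M|$ is divisible by neither the full power $p^{a}$ (where $p^a \| |G|$) nor $q^{b}$; but then $[G:M] \geq pq$ is not a prime power, and by a theorem on primitive permutation groups / the structure of maximal subgroups (here invoking that in the Frattini-free core reduction one can further peel off a minimal normal subgroup $N$, which is a direct product of isomorphic simple groups, and analyze $G$ acting on $G/M$), we derive a contradiction with minimality of $M$ as a point stabilizer. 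The \textbf{main obstacle} is precisely making this last step rigorous without assuming solvability: for solvable $G$ it is classical (maximal subgroups have prime-power index), and the general case needs either the classification-flavored input that a minimal normal subgroup forces a maximal subgroup missing only one Sylow prime, or a clever induction peeling socle factors. I would first write the solvable case in full (which covers all the applications in this paper), then indicate the general case by reducing via a minimal normal subgroup $N$: if $N$ is abelian we are in effect solvable-like at that prime; if $N$ is a product of nonabelian simple groups, handle the prime divisors of $|N|$ by exhibiting maximal subgroups of $G$ not containing $N$ and distinct from those witnessing primes outside $\pi(N)$. For the equality characterization: if $G$ is cyclic then $|m(G)| = |\pi(G)|$ is immediate (one maximal subgroup per prime, as in the Corollary following Theorem~\ref{thm:NIL GROUPS}); conversely, if $|m(G)| = |\pi(G)|$, the injection above must be a bijection with no slack, forcing each $M_p$ to be the \emph{unique} maximal subgroup of index a power of $p$ and normal, whence $G/\Phi(G)$ is cyclic, hence (as $\Phi(G)$ is nilpotent and $G/\Phi(G)$ cyclic forces $G$ nilpotent, then $G$ nilpotent with cyclic Frattini quotient forces $G$ cyclic) $G$ itself is cyclic.
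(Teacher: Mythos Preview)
The paper does not give its own proof of this theorem: it is quoted from \cite{LAUDERDALE2013} and used only as motivation in Section~\ref{sec:OPEN QUESTIONS}. There is therefore no in-paper argument to compare your proposal against, so I will assess the proposal on its own merits.

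Your write-up is an outline with an acknowledged gap rather than a proof. First, a factual slip: you assert that $\pi(G)=\pi(G/\Phi(G))$ ``need not hold in general,'' but in fact it always holds (and the present paper uses this identity in Section~\ref{sec:OPEN QUESTIONS}). The standard argument: if a prime $p$ divided $|\Phi(G)|$ but not $|G/\Phi(G)|$, the unique Sylow $p$-subgroup $P$ of the nilpotent group $\Phi(G)$ would be normal in $G$ and a Sylow $p$-subgroup of $G$; a Schur--Zassenhaus complement $H$ would then lie in some maximal subgroup $M\supseteq\Phi(G)\supseteq P$, forcing $G=PH\subseteq M$. So the Frattini reduction is clean: $|m(G)|=|m(G/\Phi(G))|$ and $|\pi(G)|=|\pi(G/\Phi(G))|$, and you may assume $\Phi(G)=1$ with no loss --- there is no ``off by one'' problem.

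The real gap is the one you yourself flag: in the Frattini-free case you choose, for each $p\in\pi(G)$, a maximal subgroup $M_p$ not containing a fixed Sylow $p$-subgroup, and you want $p\mapsto M_p$ to be injective. Your solvable argument is fine (prime-power index forces $[G:M_p]$ to be a $p$-power, so $M_p\neq M_q$ for $p\neq q$), but for nonsolvable $G$ a single maximal subgroup can certainly miss Sylow subgroups at several primes simultaneously (e.g.\ the order-$20$ Frobenius subgroup of $S_5$ contains neither a Sylow $2$- nor a Sylow $3$-subgroup), and you give no mechanism for \emph{choosing} the $M_p$ injectively beyond ``handle the primes of a nonabelian socle factor separately.'' That is the whole difficulty, and it is not resolved here. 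The equality case is likewise incomplete: from $|m(G)|=|\pi(G)|$ you jump to ``each $M_p$ is the unique maximal subgroup of $p$-power index and is normal,'' but neither uniqueness nor normality follows from the injection being a bijection without further argument, and without solvability you have not even established the injection. As written, the proposal proves the theorem only for solvable groups.
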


We believe that there is an analogous result involving $|m(G)|$ and $\iota(G)$, where $G$ is a nontrivial group. It is clear that $\iota(G) \leq |m(G)|$, and the results of this article seem to indicate that the following question would have an affirmative answer. 

\begin{question}
Does the inequality $\iota(G)<|m(G)|$ hold for all noncyclic groups $G$?
\end{question}

We are also interested in the relationship between $|\pi(G)|$ and $\hat\iota(G)$ for nontrivial solvable groups $G$.  For instance, assume that $G$ is a solvable group satisfying $\hat\iota(G)=2$; let $M_1$ and $M_2$ are inconjugate maximal subgroups of $G$ such that $M_1 \cap M_2 = \Phi(G)$. Under these assumptions, $G=M_1M_2$ and thus
	\begin{equation}\label{eqn:OMEGA_I IS 2}
		\frac{|G|}{|M_1|} \cdot \frac{|G|}{|M_2|} = \frac{|G|}{|\Phi(G)|}.
	\end{equation}
Since maximal subgroups of nontrivial solvable groups have prime power index and $\pi(G)=\pi(G/\Phi(G))$, Equation~\eqref{eqn:OMEGA_I IS 2} implies that $|\pi(G)|\leq 2$.  Therefore, $|\pi(G)| \leq \hat\iota(G)$ provided $G$ is a solvable group satisfying $\hat\iota(G)=2$.  Consequently, we ask the following question.

\begin{question}
Does the inequality $|\pi(G)| \leq \hat\iota(G)$ hold for all solvable groups $G$?
\end{question}

We claim that it is possible for $|\pi(G)|$ to exceed the intersection number $\iota(G)$.  For example, reconsider the Frobenius group of order 42, denoted by $\mathbb Z_7 \rtimes \mathbb Z_6$.  This group is solvable, and we identify it with the following permutation group:
	\[\mathbb Z_7 \rtimes \mathbb Z_6\cong \langle (1,2,3)(4,5,6), (2,4)(3,7)(5,6) \rangle.\]
The conjugate order-6 maximal subgroups
	\[M_1=\langle(1,2,5,4,3,7)\rangle \quad \text{ and } \quad M_2=\langle (1,3,4,7,5,6)\rangle\]
of $\mathbb Z_7 \rtimes \mathbb Z_6$ satisfy $M_1 \cap M_2=\{1\}$.  Since $\Phi(\mathbb Z_7 \rtimes \mathbb Z_6)=\{1\}$, it follows that $\iota(\mathbb Z_7 \rtimes \mathbb Z_6)=2$ and
	\[2=\iota(\mathbb Z_7 \rtimes \mathbb Z_6)<|\pi(\mathbb Z_7 \rtimes \mathbb Z_6)|=3,\]
as desired.


\section*{Acknowledgements} The authors would like to thank the University of Texas at Tyler's Office of Sponsored Research and Center for Excellence in Teaching and Learning for their support in conducting this research. The awards from these offices supported the research conducted for this article by two faculty members, Kassie Archer and L.-K. Lauderdale, together with undergraduate students Yansy Perez and Vincent Villalobos, and graduate students Humberto Bautista Serrano and Kayla Cook in the fall of 2017.  The fourth author of this article would also like to thank her Ph.D. advisor Alexandre Turull; the initial conversations about what we now call the intersection number began in his office.


\bibliographystyle{plain}
\bibliography{References}

\end{document}